\newcommand{\qed}{\hfill\quad\rule{0.5em}{0.5em}}
\begin{document}

\markboth{WENHAO LI, ZONGYUAN HAN, SHENGXIN ZHU}{On Symmetric Lanczos Quadrature for Stochastic Trace Estimation}

\title{ON SYMMETRIC LANCZOS QUADRATURE FOR STOCHASTIC TRACE ESTIMATION}



\author{Wenhao Li
\affiliation{Department of Mathematics, Hong Kong Baptist University, Hong Kong, 999077, China \\
Email: liwenhao@bnbu.edu.cn}
\and
Shengxin Zhu
\affiliation{Beijing Normal University, Zhuhai 519087, China \\
Email: Shengxin.Zhu@bnu.edu.cn}}
          
\maketitle

\begin{abstract}
    A common approach to approximating quadratic forms of matrix functions is to use a quadrature rule derived from the Lanczos process, known as a Lanczos quadrature. Although symmetric quadrature rules are computationally favorable, it has remained unclear whether a symmetric Lanczos quadrature is practically feasible. In this work, we resolve this ambiguity by establishing necessary and sufficient conditions for the existence of symmetric Lanczos quadratures. We show that the sufficient condition can be met for a class of Jordan-Wielandt matrices by carefully constructing initial vectors with specific distributions for the Lanczos algorithm. Applying such a symmetric Lanczos quadrature to compute the Estrada index of bipartite or directed graphs ensures that the resulting stochastic trace estimators are unbiased. Furthermore, we observe that the variance of the quadratic form estimator based on the symmetric Lanczos quadrature is lower than that of the standard estimator.
    
\end{abstract}

\begin{classification}
    65D32, 65F15
\end{classification}

\begin{keywords}
    symmetric Lanczos quadrature, stochastic trace estimation, Estrada index, bipartite graph
\end{keywords}

\section{Introduction}
\label{sec:background}

Given a smooth function $f$ defined on $[a, b]$ and a non-negative weight function $\omega$, the Golub-Welsch algorithm \cite{GW69} produces a Gaussian quadrature rule,
$$\int_{a}^{b} f(t) \omega(t) dt \approx \sum_{k=1}^m \tau_k f(\theta_k), $$
where quadrature nodes $\{ \theta_k\}_{k=1}^m$ are the eigenvalues of the tridiagonal \textit{Jacobi matrix} produced in the Lanczos process \cite{L50} and the quadrature weights are the squares of the first element of each normalized eigenvector of the Jacobi matrix. Such a quadrature rule based on the Lanczos method plays a central role in estimating quadratic forms involving matrix functions  
\begin{equation}
      \label{eq:GQ}
    Q(\textbf{u},f,\textbf{A}) = \textbf{u}^T f(\textbf{A}) \textbf{u} \approx \Arrowvert \textbf{u}\Arrowvert_2^2 \sum_{k=1}^m \tau_k f(\theta_k),
\end{equation}
where $\textbf{A}$ is a symmetric matrix and where $\textbf{u}$ is a vector, see \cite{GM94,GS94} or the following sections for more details. This method was previously used in the analysis of iterative methods \cite{BG99,CGR99,F92,M97} \cite[p.195]{M99} and the finite element method \cite{R92}. Bai, Golub and Fahey applied this method within the realm of quantum chromodynamics (QCD) \cite{BFG96}. Besides, it can be used to measure the centrality of complex networks. For example, subgraph centrality can be quantified as $\textbf{e}_i^T e^{\gamma A} \textbf{e}_i$ \cite{ER05}, whereas resolvent-based subgraph centrality is associated with $\textbf{e}_i^T(\textbf{I} - \gamma \textbf{A})^{-1} \textbf{e}_i$ \cite{EH10}, where $\textbf{e}_i$ is the $i^{th}$ column of the identity matrix. Rapid calculation of centrality metrics is crucial for real-world applications, such as optimizing urban public transportation networks \cite{BS21,BS22}. In genomics, statisticians often require approximations of the distribution of quadratic forms \eqref{eq:GQ} with normally distributed vectors \cite{CL19,BQF,LBPNR18}. In Gaussian process regression, hyperparameter optimization is achieved through the maximization of a likelihood function, which requires the estimation of quadratic forms \eqref{eq:GQ} \cite{DE17,SA22,zhu18}. 
The Lanczos quadrature method offers effective computational means for accurately computing these quantities. Furthermore, this method can be integrated into Krylov spectral methods to solve time-dependent partial differential equations \cite{L05,L07}. 

Recent studies \cite{CK21,UCS17} have proposed frameworks that integrate the Lanczos method, quadrature rules, and Monte Carlo techniques to estimate the trace of matrix functions \cite{AT11,H90}, leveraging a series of quadratic forms with randomly generated vectors. Both follow the stochastic Lanczos quadrature method. One gives error analysis of symmetric Lanczos quadrature on approximating the Riemann-Stieltjes integral, while the other focuses on the error of asymmetric Lanczos quadrature. 

Let us first define the symmetric Lanczos quadrature.
\begin{definition}
    \label{def:symlq}
    Let $\textbf{A}\in \mathbb{R}^{n \times n}$ be a symmetric matrix, and let $\textbf{u}\in\mathbb{R}^n$ be an initial vector for the $m$-step Lanczos iteration, where $m \le n$. Then the corresponding $m$-node Lanczos quadrature is \textbf{symmetric} if for all $j = 1,\ldots,m$,
    \begin{itemize}
        \item quadrature nodes $\{\theta_k\}_{k=1}^j$ are symmetric about $0$, and
        \item symmetric nodes (about $0$) possess identical quadrature weights, i.e., $\tau_{p}=\tau_q$ if $\theta_p = -\theta_q$, $p,q\le j$.
    \end{itemize}
\end{definition}
An $m$-node Lanczos quadrature is \textbf{asymmetric} if at least one property in Definition \ref{def:symlq} does not hold. Ubaru, Chen, and Saad presented the upper bound of the quadrature error when the Gauss quadrature rule is symmetric (\cite[Section 4.1]{UCS17}, \cite{R69}). Few years later, Cortinovis and Kressner remarked that the integral of odd-degree Chebyshev polynomials associated with the measure (in Lanczos iterations) is actually not zero by symmetry, thus proposing a different upper bound \cite[Section 3]{CK21}. Comparative analysis of the two results reveals that a symmetric Lanczos quadrature is more computationally favorable. Similarly, \cite{MS14} showed that the symmetry of weight functions, such as Gegenbauer weights or Hermite weights, can be utilized to speed up quadrature computations. Then an intriguing inquiry arises regarding the circumstances under which the Lanczos algorithm would yield symmetric quadrature nodes and weights in exact arithmetic for better computational performance. Furthermore, we wonder how to ensure symmetric Lanczos quadrature in practice, e.g., in stochastic trace estimation.

To the best of our knowledge, over the past few years, researchers have developed and analyzed the Lanczos-based algorithms in spectrum approximation \cite{CTU21,CTU24}, matrix function estimation \cite{JL15,CGMM22}, and trace estimation \cite{CH23,E23,M21,P22}. There are also studies investigating spatial distribution \cite{B13,C11,CE12,M15}, convergence characteristics \cite{W05,ELM21,TM12}, and stabilization techniques \cite{MS06,P71,P80} for the Ritz values (eigenvalues of the tridiagonal Jacobi matrix obtained within the Lanczos method or Arnoldi process). However, none of these methods well studies the symmetry of Lanczos quadrature, or even its applications in stochastic trace estimation. To fill the theoretical gap, this work derives a necessary (Theorem \ref{thm:nec}) and sufficient (Theorem \ref{Thm:main}) condition for $m$-node symmetric Lanczos quadrature rules. Specifically, for Jordan-Wielandt matrices with $\textbf{B} \in \mathbb{R}^{n_1 \times n_2}$
\begin{equation}
    \label{Eq:A}
    \textbf{A} = \begin{bmatrix}
         & \textbf{B} \\
        \textbf{B}^H & 
    \end{bmatrix} \in \mathbb{R}^{(n_1+n_2) \times (n_1+n_2)},
\end{equation}
that have symmetric spectra around $0$ \cite{B96,S03}, we show one can construct a certain type of initial vectors to guarantee a symmetric Lanczos quadrature without information of the matrix rank (Theorem \ref{prop:2}). In applications that compute the Estrada index, we propose a modified unbiased trace estimator for matrix exponentials. Numerical simulations demonstrate the higher convergence based on symmetric quadrature rules.

The paper is organized as follows.  We start with some preliminary results related to the Lanczos quadrature method in Section \ref{sec:basics} to make this work self-contained. In Section \ref{sec:EA}, we propose sufficient and necessary conditions for the Lanczos quadrature with symmetric quadrature nodes and weights. Furthermore, for approximating the Estrada index of directed graphs and bipartite graphs in complex networks, we propose unbiased trace estimators that utilize the symmetry of the Lanczos quadrature. Numerical experiments are given in Section \ref{sec:experiments} to illustrate our theoretical results on the basis of synthetic matrices and real applications, and conclusions are presented in Section \ref{sec:cr}. Table \ref{tab:notation} summarizes all the symbols used in this paper.

\section{Lanczos quadrature method}
\label{sec:basics}
For a symmetric\footnote{The Lanczos process does not require positive definiteness, but in some applications such a property is needed.} matrix $\textbf{A}$, its quadratic form $Q(\textbf{u},f,\textbf{A})$ with vector $\textbf{u}$ and matrix function $f$ can be estimated using the Lanczos quadrature method \cite{GM94,GS94}. Let $\textbf{A} = \textbf{Q}\mathbf{\Lambda} \textbf{Q}^T$ be the eigen-decomposition with $\mathbf{\Lambda} = \mathrm{diag}(\lambda_1, \lambda_2, \cdots, \lambda_n)$ and $\lambda_{\min} = \lambda_1 \le \lambda_2 \le \ldots \le \lambda_n = \lambda_{\max}$ without loss of generality. Then, $f(\mathbf{\Lambda})$ is a diagonal matrix with entries $\{f(\lambda_j)\}_{j=1}^n$ and $f(\textbf{A}) = \textbf{Q}f(\mathbf{\Lambda})\textbf{Q}^T$. Let $\textbf{v}$ be the normalized vector $\textbf{v} = \textbf{u}/\Arrowvert\textbf{u}\Arrowvert_2$ and $\boldsymbol{\mu} = \textbf{Q}^T \textbf{v}$, then $Q(\textbf{u},f,\textbf{A})$ is represented as
\begin{equation}
    \label{eq:qfmu}
    Q(\textbf{u},f,\textbf{A}) = \textbf{u}^Tf(\textbf{A})\textbf{u} = \Arrowvert\textbf{u}\Arrowvert_2^2 \textbf{v}^T \textbf{Q} f(\mathbf{\Lambda})\textbf{Q}^T\textbf{v} = \Arrowvert\textbf{u}\Arrowvert_2^2 \boldsymbol{\mu}^Tf(\mathbf{\Lambda})\boldsymbol{\mu}. 
\end{equation}
The last quadratic form in \eqref{eq:qfmu} can equivalently be considered as the sum $\sum_{j=1}^nf(\lambda_j)\mu_j^2$, where $\{\mu_j\}_{j=1}^n$ are the elements of the \textit{measure vector} $\boldsymbol{\mu} = \textbf{Q}^T\textbf{v} = [\mu_1,\ldots,\mu_n]^T$. With the construction of the measure $\mu(t)$
\begin{equation}
    \label{eq:measure_f}
        \mu(t)=\left\{
    \begin{aligned}
    &\quad 0 , & {\rm if} \ t < \lambda_1=a, \\
    &\sum_{j=1}^{k-1} \mu_j^2 , & {\rm if}\ \lambda_{k-1} \le t < \lambda_k, k=2,...,n, \\
    &\sum_{j=1}^{n} \mu_j^2 , & {\rm if} \ t \ge \lambda_n=b,
    \end{aligned}
    \right.
\end{equation}
one may consider the sum as a Riemann-Stieltjes integral $\mathcal{I}$
\begin{equation}
    \begin{aligned}
    \label{eq:RS}
    \mathcal{I}\stackrel{\mathrm{def}}{=}\int_{\lambda_1}^{\lambda_n}f(t) d\mu(t) =\sum_{j=1}^n f(\lambda_j)\mu_j^2.
\end{aligned}
\end{equation}
Further, according to the Gauss quadrature rule \cite[Chapter 6.2]{GM09}, the Riemann-Stieltjes integral $\mathcal{I}$ can be approximated by an $m$-point Lanczos quadrature rule $\mathcal{I}_{m}$ such that the last term in equation \eqref{eq:RS} reads
\begin{equation}
\label{eq:GQR}
\mathcal{I}_m \stackrel{\mathrm{def}}{=} \sum_{k=1}^{m} \tau_k f(\theta_k) \approx \mathcal{I},
\end{equation}
where $m$ is the number of quadrature nodes. Note that the quadrature nodes $\{\theta_k\}_{k=1}^m$ and the weights $\{\tau_k\}_{k=1}^m$ in \eqref{eq:GQR} can be obtained by the Lanczos algorithm \cite{BFG96,P98,GM09}. Let $\textbf{T}_{(m)}$ be the Jacobi matrix obtained in the Lanczos process. Then, the quadrature nodes are the eigenvalues of $\textbf{T}_{(m)}$, and the weights are the squares of the first elements of the corresponding normalized eigenvectors \cite{GW69}. Finally, one can obtain the approximation of $Q(\textbf{u},f,\textbf{A})$ as in \eqref{eq:GQ}, Algorithm \ref{alg:Lanc} details such a process\cite[Section 7.2]{GM09}.


\begin{remark}
    To improve the numerical stability of Algorithm \ref{alg:Lanc}, one may complement several steps of reorthogonalization between lines 11 and 12.
\end{remark}
\begin{remark}
    If the Lanczos process breaks down before or at step $m$, Algorithm \ref{alg:Lanc} would compute \eqref{eq:GQ} exactly and terminates. 
\end{remark}

\begin{algorithm}[t]
        \raggedright
	\caption{Lanczos Quadrature Method for Quadratic Form Estimation} 
	\label{alg:Lanc}
	\hspace*{0.02in} {\bf Input:} Symmetric (positive definite) matrix $\textbf{A} \in \mathbb{R}^{n\times n}$, vector $\textbf{u} \in \mathbb{R}^{n}$, matrix function $f$, steps of Lanczos iterations $m$.\\
	\hspace*{0.02in} {\bf Output:} Approximation of the quadratic form $Q_m(\textbf{u}, f, \textbf{A}) \approx \textbf{u}^T f(\textbf{A}) \textbf{u}$.
	\begin{algorithmic}[1]
	    \State $\textbf{v}^{(1)} = \textbf{u}/\Arrowvert \textbf{u}\Arrowvert_2$
        \State $\alpha_1 = {\textbf{v}^{(1)}}^T \textbf{A} \textbf{v}^{(1)}$
	    \State $\textbf{u}^{(2)} = \textbf{A}\textbf{v}^{(1)} - \alpha_1 \textbf{v}^{(1)}$
		\For{$k = 2 \text{ to } m$}
			\State $\beta_{k-1} = \Arrowvert\textbf{u}^{(k)}\Arrowvert_2$
			\If{$\beta_{k-1} = 0$} 
                \State $m = k-1$
                \State \textbf{break}
                \EndIf 
                \State $\textbf{v}^{(k)} = \textbf{u}^{(k)}/\beta_{k-1}$
			\State $\alpha_{k} = {\textbf{v}^{(k)}}^T \textbf{A} \textbf{v}^{(k)}$
			\State $\textbf{u}^{(k+1)} = \textbf{A} \textbf{v}^{(k)} - \alpha_{k}             \textbf{v}^{(k)} - \beta_{k-1}\textbf{v}^{(k-1)}$
		\EndFor
            \State \textbf{end for}
            \State $\textbf{T}_{(m)} = \left[\begin{array}{ccccc}
               \alpha_1 &  \beta_1 & 0 & \cdots & 0 \\
               \beta_1 & \alpha_2 & \beta_2 & \ddots & \vdots \\
               0 & \beta_2 & \ddots & \ddots & 0 \\
               \vdots & \ddots & \ddots & \ddots & \beta_{m-1} \\
               0 & \cdots & 0 & \beta_{m-1} & \alpha_{m}
            \end{array}\right]$
            \State $[\mathbf{\Psi}_{(m)},\mathbf{\Theta}_{(m)}] = \texttt{eig}\left(\textbf{T}_{(m)}\right)$
            \State $[\tau_1,\ldots,\tau_m] = \left(\textbf{e}_1^T \mathbf{\Psi}_{(m)}\right)\odot\left(\textbf{e}_1^T \mathbf{\Psi}_{(m)}\right) $
            \State $[\theta_1,\ldots,\theta_m]^T = \texttt{diag}(\mathbf{\Theta}_{(m)})$
            \State \Return $Q_m(\textbf{u},f,\textbf{A}) = \Arrowvert\textbf{u}\Arrowvert_2^2 \sum_{k=1}^m \tau_k f(\theta_k)$
	\end{algorithmic}
\end{algorithm}

Error analyses for the approximation $\mathcal{I}_m$ were developed in \cite[Section 4.1]{UCS17} and \cite[Section 3]{CK21}. The summarized theorems are shown below for convenience.
\begin{theorem}
    \label{thm:UCSthm4.2} \cite[Theorem 4.2]{UCS17}
    Let $g$ be analytic in $\left[-1, 1\right]$ and analytically continuable in the open Bernstein ellipse $E_{\rho}$ with foci $\pm1$ and the sum of major and minor axis equal to $\rho > 1$. Let $M_\rho$ be the maximum of $|g(t)|$ on $E_\rho$. Then the $m$-point Lanczos quadrature approximation satisfies
    \begin{equation}
        \label{eq:UCSthm4.2}
        |\mathcal{I}-\mathcal{I}_m| \le \frac{4M_\rho}{1 - \rho^{-2}} \rho^{-2m}.
    \end{equation}
\end{theorem}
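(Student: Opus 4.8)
The plan is to combine three classical facts: the $m$-point Gauss (Lanczos) quadrature integrates every polynomial of degree at most $2m-1$ exactly and has positive weights; the Chebyshev coefficients of a function analytic in a Bernstein ellipse decay geometrically; and --- crucially, in the symmetric regime that is the subject of this paper --- the odd-index contributions cancel. First I would pass to the normalization implicit in the statement. Since $\textbf{v} = \textbf{u}/\Arrowvert\textbf{u}\Arrowvert_2$, the measure in \eqref{eq:measure_f} has total mass $\mathcal{I}(1)=\sum_j\mu_j^2=1$, and the Gauss weights satisfy $\tau_k>0$ with $\sum_k\tau_k=\mathcal{I}_m(1)=1$; after the affine change of variables sending the (symmetric, for the matrices of interest) spectral interval $[\lambda_1,\lambda_n]$ onto $[-1,1]$, both $\mathcal{I}$ and $\mathcal{I}_m$ become probability functionals, so $|\mathcal{I}(h)|,|\mathcal{I}_m(h)|\le\max_{[-1,1]}|h|$ for continuous $h$, and evenness of the measure is preserved.

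Next I would expand the rescaled integrand $g=\sum_{k\ge0}a_kT_k$ in Chebyshev polynomials on $[-1,1]$ and invoke the standard estimate: if $g$ is analytic in $E_\rho$ with $|g|\le M_\rho$ there, then a Cauchy estimate on the Joukowski parametrization of the ellipse gives $|a_k|\le 2M_\rho\rho^{-k}$ for $k\ge1$. This geometric decay makes the Chebyshev series converge uniformly on $[-1,1]$, which is precisely what lets me apply the bounded linear functional $\mathcal{I}-\mathcal{I}_m$ term by term.

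The core of the argument is then a term-by-term bookkeeping. Exactness on polynomials of degree $\le 2m-1$ kills $(\mathcal{I}-\mathcal{I}_m)(T_k)$ for all $k\le 2m-1$. For odd $k$, evenness of the measure together with the node/weight symmetry of Definition \ref{def:symlq} forces $\mathcal{I}(T_k)=\mathcal{I}_m(T_k)=0$, because $T_k$ is an odd function (pair up $\pm\theta_p$ with equal weights; a node at $0$ contributes $T_k(0)=0$). Only even indices $k\ge 2m$ survive, and for those I bound crudely $|(\mathcal{I}-\mathcal{I}_m)(T_k)|\le|\mathcal{I}(T_k)|+|\mathcal{I}_m(T_k)|\le 2$ using $\Arrowvert T_k\Arrowvert_{\infty,[-1,1]}=1$. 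Summing the surviving terms,
\[
|\mathcal{I}-\mathcal{I}_m|\;\le\!\!\sum_{\substack{k\ge 2m\\ k\ \mathrm{even}}}\!\!2|a_k|\;\le\;\sum_{j\ge m}4M_\rho\,\rho^{-2j}\;=\;\frac{4M_\rho}{1-\rho^{-2}}\,\rho^{-2m},
\]
which is \eqref{eq:UCSthm4.2}.

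The step I expect to be the real obstacle is the one that makes the constant sharp: discarding the odd-index terms is legitimate only because the measure --- hence the Gauss rule --- is symmetric about $0$, so this hypothesis must be surfaced explicitly rather than used silently. Without it the odd $T_k$ do contribute, and the most the same argument yields is the weaker factor $1/(1-\rho^{-1})$ (equivalently, the crude bound $2\min_{p\in\Pi_{2m-1}}\Arrowvert g-p\Arrowvert_{\infty,[-1,1]}$ via best polynomial approximation), which is exactly the correction pointed out in \cite{CK21} and the reason this paper cares about when the Lanczos quadrature is symmetric. The remaining ingredients --- exactness of Gauss quadrature to degree $2m-1$, positivity of its weights, and the Chebyshev-coefficient decay --- are standard, and the only routine care needed is the uniform-convergence justification for passing $\mathcal{I}-\mathcal{I}_m$ through the series.
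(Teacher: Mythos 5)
Your argument is correct and is essentially the standard proof of this bound; note that the paper itself does not prove this statement but quotes it verbatim from \cite[Theorem 4.2]{UCS17}, so there is no in-paper proof to diverge from. Your reconstruction (Gauss exactness to degree $2m-1$, Chebyshev coefficient decay $|a_k|\le 2M_\rho\rho^{-k}$, and cancellation of odd-index terms by symmetry of the measure) matches the route the paper attributes to \cite{UCS17}, and you correctly surface the hidden hypothesis --- symmetry of the measure and of the quadrature rule --- whose absence in general is exactly the point of \cite{CK21} and the motivation for the present paper's Theorems \ref{thm:nec} and \ref{Thm:main}.
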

\begin{theorem}
    \label{thm:CK21thm3} \cite[Corollary 3]{CK21}
    Let $g$ be analytic in $\left[-1, 1\right]$ and analytically continuable in the open Bernstein ellipse $E_{\rho}$ with foci $\pm1$ and the sum of major and minor axis equal to $\rho > 1$. Let $M_\rho$ be the maximum of $|g(t)|$ on $E_\rho$. Then the $m$-point Lanczos quadrature approximation satisfies
    \begin{equation}
        \label{eq:CK21thm3}
        |\mathcal{I} - \mathcal{I}_m| \le \frac{4M_\rho}{1-\rho^{-1}} \rho^{-2m}.
    \end{equation}
\end{theorem}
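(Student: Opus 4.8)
\medskip
\noindent The plan is to derive \eqref{eq:CK21thm3} from two classical ingredients: the exactness of Gauss quadrature on polynomials of degree $\le 2m-1$, and the Bernstein-ellipse decay estimate for the Chebyshev coefficients of an analytic function. First I would fix the normalization. After the affine change of variables sending $[\lambda_{\min},\lambda_{\max}]$ onto $[-1,1]$ (in which $g$ is given), the $m$-node Lanczos rule $\mathcal{I}_m$ is precisely the $m$-point Gauss rule for the transformed measure $\mu$ of \eqref{eq:measure_f}: its nodes are the eigenvalues of $\textbf{T}_{(m)}$ and its weights the squared leading components of the normalized eigenvectors, so $\mathcal{I}_m[p]=\mathcal{I}[p]$ whenever $\deg p\le 2m-1$. (Should the Lanczos process break down at some step $\ell<m$, the rule reproduces $\mathcal{I}$ exactly and there is nothing to prove.) I would also record that $\int d\mu=\sum_j\mu_j^2=\Arrowvert\textbf{v}\Arrowvert_2^2=1$ and that the Gauss weights are positive with $\sum_{k=1}^m\tau_k=\Arrowvert\textbf{e}_1^T\mathbf{\Psi}_{(m)}\Arrowvert_2^2=1$; hence both $\mathcal{I}$ and $\mathcal{I}_m$ act on $C([-1,1])$ as linear functionals of norm exactly $1$.

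\noindent The second step is the approximation argument. For any polynomial $p$ with $\deg p\le 2m-1$, exactness gives $\mathcal{I}-\mathcal{I}_m=(\mathcal{I}-\mathcal{I}_m)[\,g-p\,]$, whence $|\mathcal{I}-\mathcal{I}_m|\le|\mathcal{I}[g-p]|+|\mathcal{I}_m[g-p]|\le 2\,\Arrowvert g-p\Arrowvert_{\infty,[-1,1]}$. I would then take $p$ to be the degree-$(2m-1)$ truncation $g_{2m-1}$ of the Chebyshev expansion $g=\sum_k a_kT_k$ and use the standard fact that analyticity of $g$ on the open Bernstein ellipse $E_\rho$ with $|g|\le M_\rho$ there forces $|a_k|\le 2M_\rho\rho^{-k}$, so that $\Arrowvert g-g_{2m-1}\Arrowvert_\infty\le\sum_{k\ge 2m}2M_\rho\rho^{-k}=\dfrac{2M_\rho\rho^{-(2m-1)}}{\rho-1}=\dfrac{2M_\rho\rho^{-2m}}{1-\rho^{-1}}$. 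Multiplying by $2$ yields $|\mathcal{I}-\mathcal{I}_m|\le\dfrac{4M_\rho\rho^{-2m}}{1-\rho^{-1}}$, which is \eqref{eq:CK21thm3}.

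\noindent The point that requires care is not any calculation but the size of the constant, and specifically not over-sharpening it as in Theorem~\ref{thm:UCSthm4.2}. The improved factor $1/(1-\rho^{-2})$ there is obtained by discarding the odd part of $g$: if the measure $\mu$ is symmetric about $0$, then both $\mathcal{I}$ and the (then symmetric) rule $\mathcal{I}_m$ annihilate every odd function, so the error reduces to $(\mathcal{I}-\mathcal{I}_m)[g_{\mathrm{even}}]$, and $g_{\mathrm{even}}$ has a Chebyshev expansion supported on even indices only, which replaces the geometric ratio $\rho^{-1}$ by $\rho^{-2}$ in the tail. For a generic Lanczos starting vector, however, $\mu$ need not be symmetric, so this reduction is unavailable and the unconditional statement is the weaker bound \eqref{eq:CK21thm3}. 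Making the symmetry hypothesis genuinely hold — so that the sharper analysis applies — is exactly the problem addressed in the next section.
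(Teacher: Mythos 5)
Your proof is correct: the paper itself does not prove this statement but merely cites \cite[Corollary 3]{CK21}, and your argument — exactness of the $m$-point Gauss rule on polynomials of degree $\le 2m-1$, the fact that $\mathcal{I}$ and $\mathcal{I}_m$ are positive unit-mass functionals so that $|\mathcal{I}-\mathcal{I}_m|\le 2\Arrowvert g-p\Arrowvert_\infty$, and the Chebyshev coefficient bound $|a_k|\le 2M_\rho\rho^{-k}$ giving the tail $2M_\rho\rho^{-2m}/(1-\rho^{-1})$ — is exactly the standard derivation underlying the cited corollary. Your closing remark about where the sharper factor $1/(1-\rho^{-2})$ of Theorem \ref{thm:UCSthm4.2} comes from (and why it requires symmetry of the measure) also matches the paper's discussion.
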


To our knowledge, the discrepancy between Theorem \eqref{thm:UCSthm4.2} and Theorem \eqref{thm:CK21thm3} results from the symmetry of Lanczos quadrature. The authors of \cite{UCS17} derived the error bound \eqref{eq:UCSthm4.2} when the Gauss quadrature rule is symmetric, whereas \cite{CK21} remarked that the integral of the odd-degree Chebyshev polynomials against the measure $\mu(t)$ is actually not zero by symmetry. Thus, these two bounds \eqref{eq:UCSthm4.2} and \eqref{eq:CK21thm3} are valid when the quadrature rule is symmetric and asymmetric respectively. By fixing a tolerance $\epsilon$, the lower bounds of the required Lanczos iterations in the two cases are
\begin{equation}
    \label{eq:m_asym}
    m_{\text{asym}} \ge \frac{1}{2 \log(\rho)}\cdot \left[ \log(4M_{\rho}) - \log(1-\rho^{-1}) - \log(\epsilon) \right],
\end{equation}
\begin{equation}
        \label{eq:m_sym}
        m_{\text{sym}} \ge \frac{1}{2 \log(\rho)}\cdot \left[ \log(4M_{\rho}) - \log(1-\rho^{-2}) - \log(\epsilon) \right].
\end{equation}

Since $\rho>1$, the two bounds differ from the term
$$ m^{*} = m_{asym}-m_{sym}=\log(1+\rho^{-1})/2\log(\rho)>0.$$
Note that the elliptical radius is not fixed but is normally defined by $\rho = (\sqrt{\kappa} + 1)/(\sqrt{\kappa}- 1)$ \cite{UCS17}, where $\kappa$ is the condition number of matrix. Thus, the difference between \eqref{eq:m_asym} and \eqref{eq:m_sym} becomes greater when matrix is ill-conditioned. Moreover, since the choice of $m$ determines the computational complexity $\mathcal{O}(n^2m)$ of Algorithm \ref{alg:Lanc}, it is important to study the conditions under which the Lanczos process can generate symmetric Lanczos quadrature. 

On the other hand, as we can see from Algorithm \ref{alg:Lanc}, the quadrature nodes and weights are problem-dependent in approximating quadratic forms of a matrix function; they depend on both $\textbf{A}$ and $\textbf{u}$. For a stochastic trace estimator \cite{UCS17,CH23,E23,M21,P22}, $\textbf{u}$ is randomly distributed \cite{AT11}. In this scenario, it is non-trivial to provide a sufficient and necessary condition for a symmetric Lanczos quadrature. It is also interesting and of practical value to study how to choose a random $\textbf{u}$ for a certain class of matrices such that the sufficient condition can be realized.

\section{Symmetric Lanczos quadrature rule}
\label{sec:EA}
We first introduce Definition \ref{Def:sav} to characterize one of the two properties that a symmetric $m$-node Lanczos quadrature should have.
\begin{definition}
    \label{Def:sav}
    A vector $\textbf{w} \in \mathbb{R}^n$ is said to be an \textit{$r$-partial absolute palindrome} with an even number $r\le n$ if its elements satisfy
    \begin{equation}
        \label{eq:sav}
        |w_i| = |w_{n+1-i}|, i = 1,\ldots, \frac{r}{2}.
    \end{equation}
    If $|w_i| = |w_{n+1-i}|, i = 1,\ldots, \lfloor\frac{n}{2}\rfloor$, then $\textbf{w}$ is an \textit{absolute palindrome}.
\end{definition}
\begin{remark}
    Such a definition can help describe the symmetric equivalence of the Lanczos quadrature weights. Due to the parity of \eqref{eq:sav}, $r$ can only be an even number. Moreover, if a vector $\textbf{w} \in \mathbb{R}^{n}$ is an \textit{$r$-partial absolute palindrome}, then with the aid of permutation matrix $\textbf{P}_{(r/2)} = [\textbf{e}_{r/2},\textbf{e}_{r/2-1},\ldots,\textbf{e}_1]$ that reorders matrices in reverse order, the vector $|\textbf{w}| \in \mathbb{R}^{n}$ with absolute values can be written as
{
\renewcommand{\arraystretch}{2}
    \begin{equation}
        \label{wabs}
        |\textbf{w}|= \begin{bmatrix}
        |\textbf{w}|_{(r/2)} \\
        |\textbf{w}|_{(n-r)} \\
        \textbf{P}_{(r/2)} |\textbf{w}|_{(r/2)}
    \end{bmatrix},
\end{equation}
where the subscript represents the length of vector.
}

\end{remark}
\subsection{Necessary condition for symmetric Ritz values in the Lanczos process}
\begin{theorem}
\label{thm:nec}
    Let $\textnormal{\textbf{A}} = \textnormal{\textbf{Q}}\mathbf{\Lambda} \textnormal{\textbf{Q}}^T \in \mathbb{R}^{n \times n}$ be symmetric with rank $r$ and have $r$ distinct nonzero eigenvalues, let $\mathbf{\Lambda}$ have nondecreasing diagonal entries with $n-r$ zeros, and let $\textnormal{\textbf{v}}^{(1)}$ be a normalized unit vector for Lanczos iteration. If the $m$-node Lanczos quadrature is symmetric with respect to $0$ for all possible iterations in exact arithmetic ($m\le m^*$, where $m^*$ is the maximum iteration), then $\textnormal{\textbf{A}}$ has a symmetric eigenvalue distribution about $0$ and $\boldsymbol{\mu}^{(1)} = \textnormal{\textbf{Q}}^T \textnormal{\textbf{v}}^{(1)}$ is an $r$-partial absolute palindrome. 
\end{theorem}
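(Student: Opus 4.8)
The plan is to extract algebraic constraints on the Jacobi matrices $\textbf{T}_{(m)}$ from the symmetry hypothesis and then transfer them to $\textbf{A}$ and $\boldsymbol{\mu}^{(1)}$ via the exactness of the terminal ($m=m^{*}$) Lanczos quadrature. First I would record that, for every $m\le m^{*}$, the nodes $\{\theta_k\}_{k=1}^{m}$ are the (distinct) eigenvalues of $\textbf{T}_{(m)}$, so $\sum_{k=1}^{m}\theta_k=\operatorname{tr}\textbf{T}_{(m)}=\sum_{k=1}^{m}\alpha_k$. A finite set symmetric about $0$ has zero sum, so $\sum_{k=1}^{m}\alpha_k=0$ for all $m\le m^{*}$; with the base case $m=1$ (the single node $\theta_1=\alpha_1$ must be $0$) and differencing consecutive identities, this gives $\alpha_k=0$ for all $k\le m^{*}$. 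Hence every $\textbf{T}_{(m)}$ is tridiagonal with vanishing diagonal, so $\textbf{D}\textbf{T}_{(m)}\textbf{D}=-\textbf{T}_{(m)}$ for the signature matrix $\textbf{D}=\operatorname{diag}(1,-1,1,\dots)$; since $\textbf{D}$ is orthogonal and fixes the first coordinate, $\textbf{y}\mapsto\textbf{D}\textbf{y}$ is an eigenvector bijection that reconfirms both the spectral symmetry of $\textbf{T}_{(m)}$ and the equality of the quadrature weights at $\theta$ and $-\theta$.

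Next I would use that $m^{*}$ is the grade of $\textbf{v}^{(1)}$ relative to $\textbf{A}$: the measure $\mu$ in \eqref{eq:measure_f} is then discrete with exactly $m^{*}$ atoms — the eigenvalues of $\textbf{A}$ onto whose eigenspaces $\textbf{v}^{(1)}$ has a nonzero projection — and the $m^{*}$-node Gauss rule reproduces $\mu$ exactly. Consequently $\{\theta_k\}_{k=1}^{m^{*}}$ is precisely this set of ``active'' eigenvalues, and $\tau_k=\sum_{j:\lambda_j=\theta_k}\mu_j^2$, where $\mu_j$ denotes the $j$-th entry of $\boldsymbol{\mu}^{(1)}$. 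Since the active set is symmetric about $0$ and every nonzero eigenvalue of $\textbf{A}$ is simple, if each of the $r$ nonzero eigenvalues is active then the nonzero spectrum of $\textbf{A}$ is symmetric about $0$; adjoining the zero eigenvalue (its own mirror image) shows $\textbf{A}$ has a spectrum symmetric about $0$, so in particular $r$ is even.

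To finish I would invoke the prescribed ordering: with nondecreasing diagonal and $n-r$ central zeros, spectral symmetry forces $\lambda_i=-\lambda_{n+1-i}$ for $i=1,\dots,r/2$. For such $i$, the node $\theta=\lambda_i$ is simple with weight $\mu_i^2$ and $-\theta=\lambda_{n+1-i}$ has weight $\mu_{n+1-i}^2$, so weight symmetry yields $\mu_i^2=\mu_{n+1-i}^2$, i.e.\ $|\mu_i|=|\mu_{n+1-i}|$ for $i=1,\dots,r/2$. By Definition \ref{Def:sav} this is exactly the statement that $\boldsymbol{\mu}^{(1)}$ is an $r$-partial absolute palindrome.

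The step I expect to be the main obstacle is the transition from ``the quadrature nodes are symmetric about $0$'' to ``the full spectrum of $\textbf{A}$ is symmetric about $0$'': the Lanczos process only probes the Krylov subspace generated by $\textbf{v}^{(1)}$, so it yields no information about eigenvalues of $\textbf{A}$ in directions orthogonal to that subspace. The argument therefore hinges on the active eigenvalues exhausting the nonzero spectrum of $\textbf{A}$ — a coverage/genericity condition (which holds, e.g., whenever $\textbf{v}^{(1)}$ has a nonzero component in every eigenspace of $\textbf{A}$, as is generically the case) that I would need to isolate and state carefully; the remaining trace and moment bookkeeping is routine by comparison.
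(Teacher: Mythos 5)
Your core argument coincides with the paper's: pass to the terminal step $m=m^{*}$, use exactness of the $m^{*}$-node Gauss rule to identify the nodes with the ``active'' eigenvalues of $\textbf{A}$ (those on which $\textbf{v}^{(1)}$ has a nonzero projection) and the weights with the corresponding sums of $\mu_j^2$, and then read the spectral symmetry and the palindrome condition $|\mu_i|=|\mu_{n+1-i}|$, $i=1,\ldots,r/2$, off the assumed node and weight symmetry. The preliminary trace computation giving $\alpha_k=0$ and the conjugation $\textbf{D}\textbf{T}_{(m)}\textbf{D}=-\textbf{T}_{(m)}$ are correct and tidy, but they are not needed for this direction of the equivalence (they belong more naturally to the sufficiency argument of Lemma \ref{Lem:Suf} and Theorem \ref{Thm:main}).

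The obstacle you flag at the end is genuine, and it is precisely the point at which the paper's own proof is thinnest: the paper disposes of it with the unexplained assertion ``as $r\le m^{*}$'', which is equivalent to your coverage condition. Without some non-deficiency hypothesis on $\textbf{v}^{(1)}$ the statement fails as written. For instance, take $\mathbf{\Lambda}=\mathrm{diag}(-1,1,2)$, $\textbf{Q}=\textbf{I}$, and $\textbf{v}^{(1)}=(1,1,0)^T/\sqrt{2}$. Then $m^{*}=2$; the one-node rule has the single node $0$ and the two-node rule has nodes $\pm 1$ with equal weights $1/2$, so every possible Lanczos quadrature is symmetric about $0$, yet the spectrum $\{-1,1,2\}$ is not symmetric (and $r=3$ is odd, so the $r$-partial absolute palindrome conclusion is not even well posed). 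Your instinct to isolate the condition that $\textbf{v}^{(1)}$ have a nonzero component in every eigendirection associated with a nonzero eigenvalue is therefore exactly right: with that hypothesis added (equivalently, $r\le m^{*}$), your argument closes and reduces to the same proof as the paper's.
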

\begin{proof}
    This proof is conducted in exact arithmetic. Let $\textbf{v}^{(1)}=\textbf{Q}\boldsymbol{\xi}$ where $\boldsymbol{\xi}$ is the coordinate normalized vector. For possible $m$ we obtain the same Jacobi matrices $\textbf{T}_{(m)}$ by applying the $m$-step Lanczos algorithm to either the diagonal matrix $\mathbf{\Lambda}$ and $\boldsymbol{\xi}$ or $\textbf{A}$ and $\textbf{v}^{(1)}$. Then our problem is equivalent to discussing the symmetry of an $m$-node Lanczos quadrature, associated with $\mathbf{\Lambda}$ and $\boldsymbol{\xi}$.

    Specifically, the Lanczos quadrature are required to be symmetric about $0$ in all possible iterations, so we consider the case of maximum iteration $m^*$. The $m^*$-node Lanczos quadrature $\mathcal{I}_{m^*}$ exactly computes the Riemann-Stieltjes integral $\mathcal{I}$ \eqref{eq:RS}, where the quadrature nodes form a subset of $\mathbf{\Lambda}$'s spectrum (or they are exactly the same when $m^* = n$, which means the $n$ eigenvalues of $\textbf{A}$ are assumed simple and $\textbf{v}^{(1)}$ is not deficient in any eigenvector of $\textbf{A}$). Meanwhile, if we assume that the quadrature weights with respect to symmetric nodes are identical in all iterations, the measure $\mu(t)$ \eqref{eq:measure_f} should be centro symmetric around $(0,0.5)$, where $0$ denotes the center of the spectrum of $\mathbf{\Lambda}$ and $0.5$ is one half of the summed weights. Recall that the increments at $t=\lambda_i, i = 1,\ldots,n$ of the piecewise constant function $\mu(t)$ are the squared entries of the measure vector $\boldsymbol{\mu}^{(1)} = \textbf{Q}^T\textbf{Q}\boldsymbol{\xi} = \boldsymbol{\xi}$. Then as $r \le m^*$, one must have
    $$ \xi_i^2 = \xi_{n+1-i}^2, \quad i = 1,\ldots,r/2. $$
    Thereby $\mathbf{\Lambda}$ has a symmetric eigenvalue distribution around $0$ and $\boldsymbol{\xi}$ is an $r$-partial absolute palindrome. 
    \qed
\end{proof}
From a practical perspective, it is difficult to obtain the spectral information of matrix in advance, e.g., the multiplicity of different eigenvalues. Thus, to avoid complexity and confusion, the necessary condition (Theorem \ref{thm:nec}) is valid for matrix $\textbf{A}$ with $r$ distinct nonzero eigenvalues. In general, the multiplicity for certain eigenvalues can be greater than $1$. In this case, even when the Lanczos quadrature is symmetric about $0$ for all iterations, not every pair of $\mu_i^2$ with respect to the symmetric eigenvalues is equal. Instead, the summation of the squares of the corresponding elements in $\boldsymbol{\mu}^{(1)}$ with respect to the symmetric eigenvalues should be numerically the same. For example, if the $m$-node Lanczos quadrature is always symmetric during iterations and $$ \lambda_i= \ldots = \lambda_{i+j} = -\lambda_{n+1-i-j} = \ldots = -\lambda_{n+1-i}, \quad j\ge 0,$$
then
$$\sum_{k=i}^{i+j}\mu_{k}^2 = \sum_{k=n+1-i-j}^{n+1-i}\mu_k^2.$$

\subsection{Sufficient condition for symmetric Ritz values in the Lanczos iterations}
\label{Sec:existenceSQN}

Next, we propose and prove a sufficient condition for symmetric Ritz values in the Lanczos iterations. Note that Wülling gave an example of symmetric Ritz values generated by the Lanczos iteration when $\textbf{A}$ is a diagonal matrix with exactly one zero eigenvalue with a symmetric spectrum around zero \cite{W05}. In that case, the author restricted the form of starting vectors as
$$ \textbf{v}^{(1)} = \left[\textbf{v}_{((n-1)/2)}; v_{(n+1)/2};\pm\textbf{v}_{((n-1)/2)}\right] $$
and assumed an odd dimension $n$ of $\textbf{A}$, and the example was left unproven. Instead, our proposed sufficient condition does not make any assumption on the parity of dimension.

\begin{lemma}
\label{Lem:Suf}
     Let $\textnormal{\textbf{A}} = \textnormal{\textbf{Q}}\mathbf{\Lambda} \textnormal{\textbf{Q}}^T \in \mathbb{R}^{n \times n}$ be symmetric with rank $r$, let $\mathbf{\Lambda} = \mathrm{diag}(\lambda_1,\lambda_2, \cdots,\lambda_n)$ with eigenvalues $\lambda_1\le \lambda_2 \le \ldots \le \lambda_n$ be symmetric about $0$, and let $\textnormal{\textbf{v}}^{(1)} \in \mathbb{R}^{n}$ be a normalized initial vector for the Lanczos iteration. For the $m$-step Lanczos method with $\textnormal{\textbf{A}}$ and $\textnormal{\textbf{v}}^{(1)}$ ($m\le m^*$, where $m^*$ is the maximum iteration), if vector $\boldsymbol{\mu}^{(1)} = \textnormal{\textbf{Q}}^T \textnormal{\textbf{v}}^{(1)}$ is an $r$-partial absolute palindrome, the Jacobi matrix $\textnormal{\textbf{T}}_{(m)}$ generated by $m$-step Lanczos iteration will have a constant zero diagonal.
\end{lemma}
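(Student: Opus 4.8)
The plan is to reduce, as in the proof of Theorem~\ref{thm:nec}, to the diagonal problem: running $m$-step Lanczos on $(\textbf{A},\textbf{v}^{(1)})$ produces the same Jacobi matrix $\textbf{T}_{(m)}$ as running it on $(\mathbf{\Lambda},\boldsymbol{\mu}^{(1)})$, so it suffices to show that when $\mathbf{\Lambda}$ has a spectrum symmetric about $0$ and $\boldsymbol{\mu}^{(1)}$ is an $r$-partial absolute palindrome, every Lanczos coefficient $\alpha_k = (\textbf{v}^{(k)})^T \mathbf{\Lambda}\, \textbf{v}^{(k)}$ vanishes. The natural device is the anti-diagonal ``flip'' matrix $\textbf{J} = [\textbf{e}_n, \textbf{e}_{n-1}, \ldots, \textbf{e}_1] \in \mathbb{R}^{n\times n}$. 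Because the eigenvalues are symmetric about $0$ and listed in nondecreasing order, $\textbf{J}\mathbf{\Lambda}\textbf{J} = -\mathbf{\Lambda}$; and because the zero eigenvalues occupy the middle block untouched by the palindrome condition, I would check (this is the one mildly delicate bookkeeping point) that $\textbf{J}$ also anticommutes with $\mathbf{\Lambda}$ in the sense needed, i.e. $\mathbf{\Lambda}\textbf{J} = -\textbf{J}\mathbf{\Lambda}$, even in the presence of the $n-r$ central zeros.

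The heart of the argument is an induction showing that the flip $\textbf{J}$ acts on the Lanczos basis vectors with an alternating sign: I claim $\textbf{J}\,\textbf{v}^{(k)} = (-1)^{k-1} \textbf{v}^{(k)}$ up to the global sign ambiguity in $\textbf{v}^{(1)}$, more precisely that $|\textbf{J}\,\textbf{v}^{(k)}| = |\textbf{v}^{(k)}|$ with the signs flipping in the claimed pattern. The base case is exactly the hypothesis that $\boldsymbol{\mu}^{(1)}$ is an $r$-partial absolute palindrome, which I must first argue forces $\textbf{J}\,\boldsymbol{\mu}^{(1)} = \pm\,\boldsymbol{\mu}^{(1)}$ on the relevant (nonzero-eigenvalue) coordinates—and that the components of $\boldsymbol{\mu}^{(1)}$ in the kernel of $\mathbf{\Lambda}$ are irrelevant because $\mathbf{\Lambda}$ annihilates them. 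For the inductive step, one applies $\textbf{J}$ to the three-term recurrence $\textbf{u}^{(k+1)} = \mathbf{\Lambda}\textbf{v}^{(k)} - \alpha_k \textbf{v}^{(k)} - \beta_{k-1}\textbf{v}^{(k-1)}$, uses $\textbf{J}\mathbf{\Lambda} = -\mathbf{\Lambda}\textbf{J}$ together with the inductive hypotheses on $\textbf{v}^{(k)}$ and $\textbf{v}^{(k-1)}$, and reads off that $\textbf{J}\,\textbf{v}^{(k+1)}$ has sign $(-1)^{k}$; one must also observe that this forces $\alpha_k = 0$ at each step, since otherwise the $\alpha_k \textbf{v}^{(k)}$ term would have the ``wrong'' flip-parity and break the recurrence's consistency. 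Concretely, $\alpha_k = (\textbf{v}^{(k)})^T\mathbf{\Lambda}\textbf{v}^{(k)} = (\textbf{J}\textbf{v}^{(k)})^T\mathbf{\Lambda}(\textbf{J}\textbf{v}^{(k)}) \cdot(\pm1)^2= (\textbf{v}^{(k)})^T\textbf{J}\mathbf{\Lambda}\textbf{J}\,\textbf{v}^{(k)} = -(\textbf{v}^{(k)})^T\mathbf{\Lambda}\textbf{v}^{(k)} = -\alpha_k$, hence $\alpha_k = 0$.

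The main obstacle I anticipate is not the sign-flip induction itself—that is a clean computation once set up—but handling the kernel of $\mathbf{\Lambda}$ and the global-sign ambiguity carefully. The palindrome hypothesis only constrains $r$ of the $n$ coordinates of $\boldsymbol{\mu}^{(1)}$, so $\textbf{J}\,\boldsymbol{\mu}^{(1)}$ need not literally equal $\pm\,\boldsymbol{\mu}^{(1)}$; what one really wants is that $\boldsymbol{\mu}^{(1)}$ decomposes as a $\textbf{J}$-symmetric (or antisymmetric) part supported on the nonzero-eigenvalue coordinates plus a harmless kernel part, and that this structure is preserved by the recurrence because every term that ever gets multiplied by $\mathbf{\Lambda}$ loses its kernel component. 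A cleaner route, which I would probably adopt, is to project onto $\mathrm{range}(\textbf{A}) = \ker(\mathbf{\Lambda})^\perp$ once at the outset: the Lanczos vectors $\textbf{v}^{(k)}$ for $k\ge 2$ automatically lie (up to the irrelevant kernel direction) in a space on which $\mathbf{\Lambda}$ is invertible and $\textbf{J}$ acts as an honest involution anticommuting with $\mathbf{\Lambda}$, so the induction runs without caveats there; only the treatment of $\textbf{v}^{(1)}$ and $\alpha_1$ needs the direct argument $\alpha_1 = (\textbf{v}^{(1)})^T\mathbf{\Lambda}\textbf{v}^{(1)} = -\alpha_1$ using $|w_i| = |w_{n+1-i}|$ on the support of $\mathbf{\Lambda}$ and the vanishing of $\mathbf{\Lambda}$ on the rest. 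Finally, I would note that the conclusion holds for all $k \le m \le m^*$, so $\textbf{T}_{(m)}$ has identically zero diagonal, as claimed.
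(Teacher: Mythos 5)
Your overall strategy is exactly the one the paper uses: pass to the diagonal pair $(\mathbf{\Lambda},\boldsymbol{\mu}^{(1)})$, introduce the order-reversing permutation (your $\textbf{J}$, the paper's $\textbf{P}$) with $\textbf{P}\mathbf{\Lambda}\textbf{P}^T=-\mathbf{\Lambda}$, show by induction on the three-term recurrence that every Lanczos vector has a definite flip-parity, deduce $\alpha_k=-\alpha_k=0$, and dispose of the $n-r$ kernel coordinates by observing that $\mathbf{\Lambda}$ annihilates them.

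The one step that does not survive as literally stated is the invariant $\textbf{P}\boldsymbol{\mu}^{(k)}=(-1)^{k-1}\boldsymbol{\mu}^{(k)}$ ``up to a global sign.'' An $r$-partial absolute palindrome only gives $|\mu_i|=|\mu_{n+1-i}|$ with a sign that may vary from coordinate to coordinate: for instance $\boldsymbol{\mu}^{(1)}=(1,2,-2,1)^T/\sqrt{10}$ satisfies the hypothesis, yet $\textbf{P}\boldsymbol{\mu}^{(1)}=(1,-2,2,1)^T/\sqrt{10}\neq\pm\boldsymbol{\mu}^{(1)}$, so the base case of your induction fails. Your fallback $|\textbf{P}\boldsymbol{\mu}^{(k)}|=|\boldsymbol{\mu}^{(k)}|$ is true but too weak to close the inductive step: in $\beta_{k+1}\textbf{P}\boldsymbol{\mu}^{(k+2)}=-\mathbf{\Lambda}\textbf{P}\boldsymbol{\mu}^{(k+1)}-\beta_k\textbf{P}\boldsymbol{\mu}^{(k)}$ the two terms must carry a \emph{consistent} coordinate-wise sign pattern before the right-hand side can be recognized as a single sign times $\beta_{k+1}\boldsymbol{\mu}^{(k+2)}$. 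The paper's repair is precisely what you need: fix once and for all a diagonal signature matrix $\textbf{S}$ with $\textbf{P}\boldsymbol{\mu}^{(1)}=\textbf{S}\boldsymbol{\mu}^{(1)}$, $\textbf{S}^2=\textbf{I}$ and $\textbf{S}\mathbf{\Lambda}=\mathbf{\Lambda}\textbf{S}$, and prove the invariant $\textbf{P}\boldsymbol{\mu}^{(k+1)}=(-1)^{k}\textbf{S}\boldsymbol{\mu}^{(k+1)}$ with the \emph{same} $\textbf{S}$ at every step. Since $\textbf{S}$ is an involution commuting with $\mathbf{\Lambda}$, both your computation $\alpha_k=-\alpha_k$ and the propagation through the recurrence then go through verbatim; with that substitution your argument coincides with the paper's proof.
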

\begin{proof}
    We divide the proof into two steps: first, we prove that the lemma holds when the matrix is of full rank, i.e., $r = n$, and then we extend it to the case where $r < n$. 

    Let $\textbf{P}$ denote the permutation matrix that reverses the order of entries, i.e., $\left(\textbf{P} \boldsymbol{\mu}^{(1)}\right)_i = \left(\boldsymbol{\mu}^{(1)}\right)_{n+1-i}, i = 1,\ldots,n$, and let $\textbf{S}$ be the signature matrix (with diagonal entries $\pm 1$) that ensures $\textbf{P}\boldsymbol{\mu}^{(1)} = \textbf{S} \boldsymbol{\mu}^{(1)}$. Note that $\textbf{P}$ and $\textbf{S}$ guarantee
    $$ \textbf{P}\mathbf{\Lambda} \textbf{P}^T = -\mathbf{\Lambda},\quad \mathbf{\Lambda} \textbf{S} = \textbf{S} \mathbf{\Lambda},\quad \textbf{S} = \textbf{S}^T,\quad \textbf{S}^2 = \textbf{I}. $$
    Denoting $\boldsymbol{\mu}^{(k)} = \textbf{Q}^T \textbf{v}^{(k)}$, we wish to prove
    \begin{equation}
        \label{eq:induct}
        \alpha_k = (\textbf{v}^{(k)})^T \textbf{A} \textbf{v}^{(k)} =  0, \quad \textbf{P}\boldsymbol{\mu}^{(k+1)} = (-1)^k \textbf{S} \boldsymbol{\mu}^{(k+1)},\quad k = 1,\ldots, m
    \end{equation}
    by mathematical induction. The second equality indicates that $\boldsymbol{\mu}^{(k+1)}$ is an absolute palindrome.

    \textbf{Base case}: For $k = 1$,
    $$ \begin{aligned}
        \alpha_1 &= {\textbf{v}^{(1)}}^T \textbf{Q} \mathbf{\Lambda} \textbf{Q}^T \textbf{v}^{(1)} =  {\boldsymbol{\mu}^{(1)}}^T \mathbf{\Lambda} \boldsymbol{\mu}^{(1)} = (\textbf{P}\boldsymbol{\mu}^{(1)})^T \textbf{P} \mathbf{\Lambda} \textbf{P}^T (\textbf{P} \boldsymbol{\mu}^{(1)}) \\
        &= (\textbf{S} \boldsymbol{\mu}^{(1)})^T (-\mathbf{\Lambda})(\textbf{S} \boldsymbol{\mu}^{(1)}) 
        = -{\boldsymbol{\mu}^{(1)}}^T \textbf{S}^T \mathbf{\Lambda} \textbf{S}\boldsymbol{\mu}^{(1)} = -{\boldsymbol{\mu}^{(1)}}^T \textbf{S}^T \textbf{S} \mathbf{\Lambda} \boldsymbol{\mu}^{(1)} \\
        &= -{\boldsymbol{\mu}^{(1)}}^T \mathbf{\Lambda} \boldsymbol{\mu}^{(1)} = -\alpha_1.
    \end{aligned} $$
    Clearly, $\alpha_1 = 0$. With the relationship between the first and second Lanczos vectors, we have
    $$\beta_1 \textbf{v}^{(2)} = \textbf{A} \textbf{v}^{(1)} - \alpha_1 \textbf{v}^{(1)} = \textbf{A} \textbf{v}^{(1)},  $$
    and thus
    $$ \boldsymbol{\mu}^{(2)} = \textbf{Q}^T \textbf{v}^{(2)} = \frac{1}{\beta_1} \textbf{Q}^T \textbf{Q} \mathbf{\Lambda} \textbf{Q}^T \textbf{v}^{(1)} = \frac{1}{\beta_1} \mathbf{\Lambda} \boldsymbol{\mu}^{(1)}.$$
    Then
    $$ 
    \begin{aligned}
        \textbf{P} \boldsymbol{\mu}^{(2)} &= \frac{1}{\beta_1} \textbf{P} \mathbf{\Lambda} \textbf{P}^T \textbf{P} \boldsymbol{\mu}^{(1)} = - \frac{1}{\beta_1} \mathbf{\Lambda} \textbf{P} \boldsymbol{\mu}^{(1)} = -\frac{1}{\beta_1} \mathbf{\Lambda} \textbf{S}\boldsymbol{\mu}^{(1)} \\
        &= -\textbf{S} \left(\frac{1}{\beta_1} \mathbf{\Lambda} \boldsymbol{\mu}^{(1)}\right) = -\textbf{S} \boldsymbol{\mu}^{(2)},
    \end{aligned} $$
    which proves the second equation in \eqref{eq:induct}. 
    
    \textbf{Inductive steps}: Assume \eqref{eq:induct} is correct for $k < m$. We prove that \eqref{eq:induct} also holds for $k + 1$,
    $$ 
    \begin{aligned}
        \alpha_{k+1} &= {\boldsymbol{\mu}^{(k+1)}}^T \mathbf{\Lambda} \boldsymbol{\mu}^{(k+1)} = (\textbf{P} \boldsymbol{\mu}^{(k+1)})^T\textbf{P} \mathbf{\Lambda} \textbf{P}^T \textbf{P} \boldsymbol{\mu}^{(k+1)} \\
        &= -(\textbf{S}\boldsymbol{\mu}^{(k+1)})^T \mathbf{\Lambda} \textbf{S}\boldsymbol{\mu}^{(k+1)} = -\alpha_{k+1},
    \end{aligned}$$
    which gives $\alpha_{k+1} = 0$. Based on the three-term recurrence between Lanczos vectors 
    $$ \beta_{k+1} \textbf{v}^{(k+2)} = \textbf{A} \textbf{v}^{(k+1)} - \alpha_{k+1}\textbf{v}^{(k+1)} - \beta_k \textbf{v}^{(k)} = \textbf{A}\textbf{v}^{(k+1)} - \beta_k \textbf{v}^{(k)},$$ 
    it is trivial that
    $$ \begin{aligned}
        \boldsymbol{\mu}^{(k+2)} &= \textbf{Q}^T \textbf{v}^{(k+2)} = \frac{1}{\beta_{k+1}}\left( \textbf{Q}^T \textbf{Q} \mathbf{\Lambda} \textbf{Q}^T \textbf{v}^{(k+1)} - \beta_k \textbf{Q}^T \textbf{v}^{(k)} \right) \\ &= \frac{1}{\beta_{k+1}} \left(\mathbf{\Lambda} \boldsymbol{\mu}^{(k+1)} - \beta_k \boldsymbol{\mu}^{(k)}\right).
    \end{aligned}$$
    Then 
    $$ \begin{aligned}
        \textbf{P}\boldsymbol{\mu}^{(k+2)} &= \frac{1}{\beta_{k+1}}\left(\textbf{P}\mathbf{\Lambda} \textbf{P}^T \textbf{P} \boldsymbol{\mu}^{(k+1)} - \beta_k \textbf{P} \boldsymbol{\mu}^{(k)}\right)\\
        &= \frac{1}{\beta_{k+1}} \left( -\mathbf{\Lambda} \textbf{P} \boldsymbol{\mu}^{(k+1)} - \beta_k \textbf{P} \boldsymbol{\mu}^{(k)} \right) \\
        &= \frac{1}{\beta_{k+1}} \left( -(-1)^k \mathbf{\Lambda} \textbf{S} \boldsymbol{\mu}^{(k+1)} - (-1)^{k-1}\beta_k \textbf{S} \boldsymbol{\mu}^{(k)} \right) \\
        &= (-1)^{k+1} \textbf{S} \left(\frac{1}{\beta_{k+1}} \left( \mathbf{\Lambda} \boldsymbol{\mu}^{(k+1)} - \beta_k \boldsymbol{\mu}^{(k)} \right) \right) \\
        & = (-1)^{k+1} \textbf{S} \boldsymbol{\mu}^{(k+2)},
    \end{aligned} $$
    which completes the proof of \eqref{eq:induct} in case when $\textbf{A}$ has full rank. 

    For rank-deficient matrices that have $r$ nonzero eigenvalues, on the basis of the $r$-partial absolute palindrome $\boldsymbol{\mu}$, one may find the required $\textbf{P}$ and $\textbf{S}$ that satisfy $\left(\textbf{P}\boldsymbol{\mu}^{(1)}\right)_i = \left(\textbf{S}\boldsymbol{\mu}^{(1)}\right)_i, i = 1,\ldots, r/2,n+1-r/2,\ldots,n$. The $n-r$ values in the middle of the $\boldsymbol{\mu}^{(1)}$ vector do not affect the computation of the quadratic form, as the corresponding elements in the diagonal matrix $\mathbf{\Lambda}$ are all zero. In this case, the proposition to be proven by mathematical induction becomes
    \begin{equation}
        \alpha_k = (\textbf{v}^{(k)})^T \textbf{A} \textbf{v}^{(k)} =  0, \quad \left(\textbf{P}\boldsymbol{\mu}^{(k+1)}\right)_i = (-1)^k \left(\textbf{S} \boldsymbol{\mu}^{(k+1)}\right)_i, 
    \end{equation}
    where $k = 1,\ldots,m$ and $i = 1,\ldots, r/2,n+1-r/2,\ldots,n$. One may follow the same procedure as shown in the $r = n$ case to prove Lemma \ref{Lem:Suf}.
    \qed
\end{proof}

\begin{remark}
   One may also prove Theorem \ref{Thm:main} based on the theory of orthogonal polynomials. Let $\{l_{k}\}_{k=1}^m$ denote the orthogonal polynomials generated by the Lanczos three-term recurrence
   $$ \beta_{k}l_{k+1}(x) = (x - \alpha_k)l_k(x) - \beta_{k-1}l_{k-1}(x),
   $$
   which work on an interval symmetric about $0$. Then the roots of $l_k$ are symmetric about $0$ for $k = 0,1,\ldots$ if and only if the orthogonal polynomials of even degree are even and those of odd degree are odd functions. Thus $\alpha_k = 0$ must hold.
   
   In addition, since we use mathematical induction to prove \eqref{eq:induct} holds for every single step $k$, the conclusion of Lemma \ref{Lem:Suf} always holds even when the orthogonality is lost in finite precision arithmetic. Moreover, for a real symmetric matrix $\textbf{A}$ with rank $r \le n$, any $r^*$-partial absolute palindrome $\boldsymbol{\mu}^{(1)}$ with $r^* \ge r$ helps generate constant diagonal entries $\bar{\lambda}$ during the Lanczos process. Thus, without rank information, it is wise to let $\boldsymbol{\mu}^{(1)}$ be an absolute palindrome.
\end{remark}

Under the same assumption of Lemma \ref{Lem:Suf}, we prove that the Lanczos process would result in symmetrically distributed Ritz values and that the quadrature weights with respect to the pairs of symmetric quadrature nodes are equal. Before diving into details, it is necessary to first note that the Jacobi matrices of $m$ zero diagonals can be reordered to Jordan-Wielandt matrices \eqref{Eq:A}, namely the block form $\begin{bmatrix} & \textbf{B} \\ \textbf{B}^T & \end{bmatrix}$, where $\textbf{B} \in \mathbb{R}^{n_1\times n_2}$ and $m = n_1 + n_2$. The rearrangement follows the red-black ordering \cite[p. 211]{HY81} \cite[p. 123]{S03}, where either $n_1 = n_2$ ($m$ is even) or $n_1 = n_2 + 1$ ($m$ is odd). There are several works demonstrating that the Jordan-Wielandt matrices have symmetric eigenvalue distributions \cite[Theorem 1.2.2]{B96}\cite[Proposition 4.12]{S03}.
\begin{theorem}
\label{thm:m=n} \cite[Theorem 1.2.2]{B96}
    Let the singular value decomposition of $\textnormal{\textbf{B}} \in \mathbb{C}^{n_1 \times n_2}$ be $\textnormal{\textbf{B}} = \textnormal{\textbf{U}} \mathbf{\Sigma} \textnormal{\textbf{V}}^H,$ where $\mathbf{\Sigma} = \begin{bmatrix}
        \mathbf{\Sigma}_1 & \textnormal{\textbf{O}} \\ \textnormal{\textbf{O}} & \textnormal{\textbf{O}}
    \end{bmatrix}$, $\mathbf{\Sigma}_1 = \mathrm{diag}(\sigma_1, \sigma_2, \cdots, \sigma_{r_{\textbf{B}}})$, $r_{\textbf{B}} = \mathrm{rank}(\textnormal{\textbf{B}})$,
    $$\textnormal{\textbf{U}} = \left[\textnormal{\textbf{U}}_1, \textnormal{\textbf{U}}_2\right], \quad\textnormal{\textbf{U}}_1 \in \mathbb{C}^{n_1 \times r_{\textbf{B}}}, \quad\textnormal{\textbf{U}}_2 \in \mathbb{C}^{n_1 \times (n_1-r_{\textbf{B}})},$$
    $$\textnormal{\textbf{V}} = \left[\textnormal{\textbf{V}}_1, \textnormal{\textbf{V}}_2\right],\quad \textnormal{\textbf{V}}_1 \in \mathbb{C}^{n_2 \times r_{\textbf{B}}}, \quad\textnormal{\textbf{V}}_2 \in \mathbb{C}^{n_2 \times (n_2-r_{\textbf{B}})}.$$
    Then
    $$\textnormal{\textbf{A}} = \begin{bmatrix}
        \textnormal{\textbf{O}}_{(n_1)} & \textnormal{\textbf{B}} \\ \textnormal{\textbf{B}}^H & \textnormal{\textbf{O}}_{(n_2)}
    \end{bmatrix} = \textnormal{\textbf{Q}}^H \begin{bmatrix}
        \mathbf{\Sigma}_1 &  &  \\  &  -\mathbf{\Sigma}_1 & \\    &  & \textnormal{\textbf{O}} 
    \end{bmatrix} \textnormal{\textbf{Q}},$$
    where blank positions and $\textnormal{\textbf{O}}$ denote zero blocks of appropriate sizes, $\textnormal{\textbf{Q}}$ is unitary and is represented as
    $$ \textnormal{\textbf{Q}} = \frac{1}{\sqrt{2}}\begin{bmatrix}
        \textnormal{\textbf{U}}_1 & \textnormal{\textbf{U}}_1 & \sqrt{2} \textnormal{\textbf{U}}_2 & \textnormal{\textbf{O}} \\ \textnormal{\textbf{V}}_1 & -\textnormal{\textbf{V}}_1 & \textnormal{\textbf{O}} & \sqrt{2}\textnormal{\textbf{V}}_2 
    \end{bmatrix}^H.$$
    Thus $2r_{\textbf{B}}$ eigenpairs of $\textnormal{\textbf{A}}$ are $\left(\pm \sigma_i, \begin{bmatrix}
        \textnormal{\textbf{u}}_i \\
        \pm \textnormal{\textbf{v}}_i
    \end{bmatrix}\right), i = 1,\ldots,r_{\textbf{B}}$, and zero eigenvalues repeated $(n_1+n_2-2r_{\textbf{B}})$ times, where $\textnormal{\textbf{u}}_i$ and $\textnormal{\textbf{v}}_i$ are the columns of $\textnormal{\textbf{U}}_1$ and $\textnormal{\textbf{V}}_1$ respectively.
\end{theorem}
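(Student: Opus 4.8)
The plan is to verify the claimed factorization directly from the defining relations of the singular value decomposition, handling the nonzero singular triples and the two null spaces separately. Writing the compact SVD as $\textbf{B} = \textbf{U}_1\mathbf{\Sigma}_1\textbf{V}_1^H$, I would first record the relations I intend to use: $\textbf{B}\textbf{v}_i = \sigma_i\textbf{u}_i$ and $\textbf{B}^H\textbf{u}_i = \sigma_i\textbf{v}_i$ for $i \le r_{\textbf{B}}$; the orthonormality $\textbf{U}_1^H\textbf{U}_1 = \textbf{I}$, $\textbf{V}_1^H\textbf{V}_1 = \textbf{I}$, $\textbf{U}_2^H\textbf{U}_2 = \textbf{I}$, $\textbf{V}_2^H\textbf{V}_2 = \textbf{I}$, $\textbf{U}_1^H\textbf{U}_2 = \textbf{O}$, $\textbf{V}_1^H\textbf{V}_2 = \textbf{O}$, together with unitarity of $[\textbf{U}_1,\textbf{U}_2]$ and $[\textbf{V}_1,\textbf{V}_2]$; and the consequences $\textbf{B}^H\textbf{U}_2 = \textbf{V}_1\mathbf{\Sigma}_1\textbf{U}_1^H\textbf{U}_2 = \textbf{O}$ and $\textbf{B}\textbf{V}_2 = \textbf{U}_1\mathbf{\Sigma}_1\textbf{V}_1^H\textbf{V}_2 = \textbf{O}$, which say that the columns of $[\textbf{U}_2;\textbf{O}]$ and $[\textbf{O};\textbf{V}_2]$ lie in the kernel of $\textbf{A}$.

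With these in hand, a single block multiplication exhibits the eigenvectors: since $\textbf{A}[\textbf{x};\textbf{y}] = [\textbf{B}\textbf{y};\textbf{B}^H\textbf{x}]$,
$$\textbf{A}\begin{bmatrix}\textbf{u}_i \\ \pm\textbf{v}_i\end{bmatrix} = \begin{bmatrix}\pm\textbf{B}\textbf{v}_i \\ \textbf{B}^H\textbf{u}_i\end{bmatrix} = \begin{bmatrix}\pm\sigma_i\textbf{u}_i \\ \sigma_i\textbf{v}_i\end{bmatrix} = \pm\sigma_i\begin{bmatrix}\textbf{u}_i \\ \pm\textbf{v}_i\end{bmatrix},$$
while $\textbf{A}[\textbf{U}_2;\textbf{O}] = [\textbf{O};\textbf{B}^H\textbf{U}_2] = \textbf{O}$ and $\textbf{A}[\textbf{O};\textbf{V}_2] = [\textbf{B}\textbf{V}_2;\textbf{O}] = \textbf{O}$. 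This produces $2r_{\textbf{B}}$ eigenvectors attached to the eigenvalues $\pm\sigma_i$ and $(n_1 - r_{\textbf{B}}) + (n_2 - r_{\textbf{B}})$ eigenvectors attached to the eigenvalue $0$, i.e. $n_1 + n_2$ in total. I would then check that the matrix formed from these vectors, with the factor $1/\sqrt{2}$ on the first two groups, has orthonormal columns: the only nontrivial identities are $\tfrac12(\textbf{u}_i^H\textbf{u}_j \pm \textbf{v}_i^H\textbf{v}_j) = \tfrac12(\delta_{ij}\pm\delta_{ij})$, which simultaneously normalizes the $[\textbf{u}_i;\textbf{v}_i]$ and $[\textbf{u}_i;-\textbf{v}_i]$ groups and makes them mutually orthogonal; orthogonality against the last two groups is immediate from $\textbf{U}_1^H\textbf{U}_2 = \textbf{O}$, $\textbf{V}_1^H\textbf{V}_2 = \textbf{O}$, and within those groups from $\textbf{U}_2^H\textbf{U}_2 = \textbf{I}$, $\textbf{V}_2^H\textbf{V}_2 = \textbf{I}$. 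Being square with orthonormal columns, this matrix is unitary; it is exactly $\textbf{Q}^H$ in the statement, and reading the eigen-relations column by column gives $\textbf{A}\textbf{Q}^H = \textbf{Q}^H\,\mathrm{diag}(\mathbf{\Sigma}_1, -\mathbf{\Sigma}_1, \textbf{O})$, hence $\textbf{A} = \textbf{Q}^H\,\mathrm{diag}(\mathbf{\Sigma}_1, -\mathbf{\Sigma}_1, \textbf{O})\,\textbf{Q}$ and the stated eigenpair list.

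The argument is mostly bookkeeping with block matrices and the $1/\sqrt{2}$ normalization; the one point deserving care is the rank-deficient case, where one must confirm that the dimensions $n_1 - r_{\textbf{B}}$ and $n_2 - r_{\textbf{B}}$ are precisely what is needed for the null-space vectors to complete an orthonormal basis of $\mathbb{C}^{n_1+n_2}$, and that $\textbf{B}^H\textbf{U}_2 = \textbf{O}$, $\textbf{B}\textbf{V}_2 = \textbf{O}$ guarantee those vectors actually lie in the zero eigenspace (when $\textbf{B}$ has full column or full row rank one of the two blocks is empty, but this changes nothing in the counting).
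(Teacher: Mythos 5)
Your proposal is correct: the block computation $\textbf{A}[\textbf{u}_i;\pm\textbf{v}_i]=\pm\sigma_i[\textbf{u}_i;\pm\textbf{v}_i]$, the null-space vectors from $\textbf{B}^H\textbf{U}_2=\textbf{O}$ and $\textbf{B}\textbf{V}_2=\textbf{O}$, the $1/\sqrt{2}$ orthonormality check, and the dimension count $2r_{\textbf{B}}+(n_1-r_{\textbf{B}})+(n_2-r_{\textbf{B}})=n_1+n_2$ all hold, and together they yield $\textbf{A}=\textbf{Q}^H\,\mathrm{diag}(\mathbf{\Sigma}_1,-\mathbf{\Sigma}_1,\textbf{O})\,\textbf{Q}$. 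The paper does not prove this statement itself --- it is quoted from Bj\"orck's book --- and your direct verification is the standard argument for this classical fact, so there is nothing to compare beyond noting agreement.
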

Based on this important property, it is trivial to derive
\begin{theorem}
\label{Thm:main}
    Let $\textnormal{\textbf{A}} = \textnormal{\textbf{Q}}\mathbf{\Lambda} \textnormal{\textbf{Q}}^T \in \mathbb{R}^{n \times n}$ be symmetric with rank $r$, let $\mathbf{\Lambda} = \mathrm{diag}(\lambda_1,\lambda_2,\cdots,\lambda_n)$ with eigenvalues $\lambda_1\le \lambda_2 \le \ldots \le \lambda_n$ be symmetric about $0$, and let $\textnormal{\textbf{v}}^{(1)} \in \mathbb{R}^{n}$ be a normalized initial vector for the Lanczos iteration. For the $m$-step Lanczos method with $\textnormal{\textbf{A}}$ and $\textnormal{\textbf{v}}^{(1)}$ ($m\le m^*$, where $m^*$ is the maximum iteration), if vector $\boldsymbol{\mu}^{(1)} = \textnormal{\textbf{Q}}^T \textnormal{\textbf{v}}^{(1)}$ is an $r$-partial absolute palindrome, the distribution of $m$ Ritz values will be symmetric about $0$, and the quadrature weights corresponding to the pairs of symmetric quadrature nodes are equal.
\end{theorem}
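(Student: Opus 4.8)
The plan is to build directly on Lemma \ref{Lem:Suf}, which already does the heavy lifting. Under the hypotheses of the theorem that lemma guarantees that the Jacobi matrix $\textbf{T}_{(m)}$ produced by the $m$-step Lanczos iteration has identically zero diagonal, i.e. $\alpha_1 = \cdots = \alpha_m = 0$. Since $m \le m^*$ forces $\beta_1,\ldots,\beta_{m-1} > 0$, the matrix $\textbf{T}_{(m)}$ is an \emph{unreduced} symmetric tridiagonal matrix with vanishing diagonal, so all of its eigenvalues (the Ritz values $\{\theta_k\}_{k=1}^m$) are simple. The structural fact I would extract from the zero diagonal is the signature conjugation: taking $\textbf{D} = \mathrm{diag}(1,-1,1,-1,\ldots) \in \mathbb{R}^{m\times m}$, one checks entrywise that $\textbf{D}\textbf{T}_{(m)}\textbf{D} = -\textbf{T}_{(m)}$, because the diagonal entries are zero and each off-diagonal entry $\beta_k$ connects indices of opposite $\textbf{D}$-sign. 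Equivalently, after the red--black reordering recalled just before Theorem \ref{thm:m=n}, $\textbf{T}_{(m)}$ is a Jordan--Wielandt matrix $\begin{bmatrix} & \textbf{B} \\ \textbf{B}^T & \end{bmatrix}$.

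For the symmetry of the Ritz values, the identity $\textbf{D}\textbf{T}_{(m)}\textbf{D} = -\textbf{T}_{(m)}$ exhibits $\textbf{T}_{(m)}$ as similar to $-\textbf{T}_{(m)}$, so $\{\theta_k\}_{k=1}^m$ is invariant under negation; alternatively one applies Theorem \ref{thm:m=n} to the reordered matrix to get the spectrum as $\pm\sigma_i$ together with zeros. Simplicity of the $\theta_k$ makes $\theta \mapsto -\theta$ a genuine involution on the set of nodes, with a zero node (possible only when $m$ is odd) fixed. I would also note that the same argument applies verbatim to $\textbf{T}_{(j)}$ for every $j \le m$, since $\alpha_j = 0$ for all $j$, so the quadrature is symmetric in the full sense of Definition \ref{def:symlq}.

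For the weights, recall from Algorithm \ref{alg:Lanc} that the weight at node $\theta_k$ is $\tau_k = (\textbf{e}_1^T \boldsymbol{\psi}_k)^2$, where $\boldsymbol{\psi}_k$ is a unit eigenvector of $\textbf{T}_{(m)}$ for $\theta_k$. From $\textbf{T}_{(m)}\boldsymbol{\psi}_k = \theta_k \boldsymbol{\psi}_k$ and $\textbf{D}^2 = \textbf{I}$ one gets $\textbf{T}_{(m)}(\textbf{D}\boldsymbol{\psi}_k) = -\theta_k(\textbf{D}\boldsymbol{\psi}_k)$, and $\textbf{D}\boldsymbol{\psi}_k$ is again a unit vector, hence (by simplicity) it is, up to sign, the stored eigenvector for the node $-\theta_k$. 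Because the leading diagonal entry of $\textbf{D}$ is $+1$, we have $\textbf{e}_1^T(\textbf{D}\boldsymbol{\psi}_k) = \textbf{e}_1^T\boldsymbol{\psi}_k$, so the weight at $-\theta_k$ equals $(\textbf{e}_1^T\boldsymbol{\psi}_k)^2 = \tau_k$; the squaring absorbs the sign ambiguity. This is precisely the claim of Theorem \ref{Thm:main}.

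Regarding difficulty: essentially all the content is already in Lemma \ref{Lem:Suf}, so the proof is short, and the only points demanding a little care are (i) checking that $\textbf{T}_{(m)}$ is unreduced, so the eigenvalues are simple and the pairing $\theta_k \leftrightarrow -\theta_k$ and the eigenvector identification are unambiguous, and (ii) keeping the sign conventions straight so that the first component of the eigenvector is preserved by $\textbf{D}$. Neither is a real obstacle --- which is why the text can call the result ``trivial to derive''.
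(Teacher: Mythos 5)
Your proposal is correct, and it shares the paper's overall architecture: Lemma \ref{Lem:Suf} supplies the zero diagonal of $\textbf{T}_{(m)}$, and everything else is read off from that structure. Where you diverge is in the final step. The paper passes from the zero-diagonal tridiagonal matrix to a Jordan--Wielandt block form via red--black reordering and then invokes Theorem \ref{thm:m=n} (the SVD-based spectral decomposition) to get the $\pm\sigma_i$ pairs and the eigenvectors $[\textbf{u}_i;\pm\textbf{v}_i]$, from which it asserts in one line that the first entries of paired eigenvectors agree. You instead use the signature similarity $\textbf{D}\textbf{T}_{(m)}\textbf{D}=-\textbf{T}_{(m)}$ with $\textbf{D}=\mathrm{diag}(1,-1,1,\dots)$, which gives the spectral symmetry immediately and, more importantly, makes the weight equality genuinely transparent: $\textbf{D}\boldsymbol{\psi}_k$ is the unit eigenvector for $-\theta_k$ and $\textbf{D}$ fixes the first coordinate, so $\tau$ is preserved with the sign ambiguity absorbed by squaring. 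The paper's route requires one to check (silently) that the red--black permutation keeps index $1$ in place so that ``first entry'' means the same thing before and after reordering; your argument sidesteps that bookkeeping entirely. You also correctly flag the need for $\textbf{T}_{(m)}$ to be unreduced (so eigenvalues are simple and the pairing is well defined), and you note that the argument applies to every leading principal submatrix $\textbf{T}_{(j)}$, which is what Definition \ref{def:symlq} actually demands --- a point the paper's proof leaves implicit. No gaps.
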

\begin{proof}
    First recall that the Jacobi matrix $\textbf{T}_{(m)}$ generated by the $m$-step Lanczos iteration has $m$ simple eigenvalues if there is no breakdown. Then, based on Lemma \ref{Lem:Suf} and Theorem \ref{thm:m=n}, no matter $m$ is even or odd, $\textbf{T}_{(m)}$ has $n_2 = \lfloor\frac{m}{2}\rfloor$ symmetric pairs of eigenpairs 
    $$\Bigg\{\left(\pm\sigma_i, \begin{bmatrix}
        \textbf{u}_i \\
        \pm\textbf{v}_i
    \end{bmatrix}\right)\Bigg\}_{i=1}^{n_2}.$$
    Specifically, for odd $m$, an additional zero eigenvalue lies on the spectrum of $\textbf{T}_{(m)}$. Thus, the $m$ Ritz values exhibit symmetry about $0$. Furthermore, based on the equivalence of the pairs of first entries in the eigenvectors corresponding to the symmetric eigenvalues, symmetrically equal quadrature weights are guaranteed for such Gaussian quadrature rules.
    \qed
\end{proof}

\begin{remark}
    Note that both Lemma \ref{Lem:Suf} and Theorem \ref{Thm:main} discuss matrices with symmetric spectrum with respect to $0$. Now suppose $\textnormal{\textbf{A}} \to \textnormal{\textbf{A}} - s\textnormal{\textbf{I}}$ and $r$ means the number of eigenvalues satisfying $\lambda_j\neq s, j = 1,\ldots,n$. Then an $r$-partial absolute palindrome would guarantee a \textbf{shifted} symmetric quadrature (nodes are symmetric about the shift $s$, and weights corresponding to the pairs of symmetric nodes possess equivalence) with respect to the shifted matrix due to the `Lanczos invariance'. That is, if we apply the Lanczos method on $\textnormal{\textbf{A}} - s\textnormal{\textbf{I}}$ instead of $\textnormal{\textbf{A}}$ but with the same initial vector, then we would get the same Lanczos vectors and the Jacobi matrix $\textnormal{\textbf{T}}_{(m)}$ transforms as $\textnormal{\textbf{T}}_{(m)} \to \textnormal{\textbf{T}}_{(m)} - s\textnormal{\textbf{I}}_{(m)}$.
\end{remark}

Next, we aim at demonstrating that Theorem \ref{Thm:main} is not only theoretical but also practical. Recall that Jordan-Wielandt matrices \eqref{Eq:A} exist in various applications and perfectly match the requirement of symmetric matrices and the symmetry of corresponding eigenvalues. For example, the graph of a finite difference matrix is bipartite, meaning that the vertices can be divided into two sets in red-black order so no edges exist in each set \cite[p. 211]{HY81} \cite[p. 123]{S03}. In the analysis of complex networks, a directed network of $n$ nodes with an asymmetric adjacency matrix $\textbf{B} \in \mathbb{R}^{n\times n}$ can be extended to a bipartite undirected network with a symmetric \emph{block supra-adjacency matrix} of form \eqref{Eq:A} via bipartization \cite{BB20,BEK13,BS22}. A subsequent inquiry emerges: Does there exist an $r$-partial absolute palindrome $\boldsymbol{\mu}^{(1)}$ associated with Jordan-Wielandt matrices? If such a construct exists, what properties must the initial vectors (for the Lanczos iteration) possess to guarantee its formation?

\subsection{Realization of the sufficient condition on the Jordan-Wielandt matrices}

\label{sec:construction}
In this section, we discuss how to find a simple type of initial vectors to guarantee $r$-partial absolute palindrome that guarantee a symmetric Lanczos quadrature.

\label{sec:case}
\begin{theorem}
    \label{prop:2}
    Let $\textnormal{\textbf{B}} \in \mathbb{R}^{n_1 \times n_2}$ and $\textnormal{\textbf{A}} = \textnormal{\textbf{Q}}\mathbf{\Lambda} \textnormal{\textbf{Q}}^T = \begin{bmatrix}
         & \textnormal{\textbf{B}} \\
        \textnormal{\textbf{B}}^T & 
    \end{bmatrix} \in \mathbb{R}^{(n_1+n_2)\times (n_1+n_2)}$ with rank $r$ and nondecreasing values on the diagonal of $\mathbf{\Lambda}$. Then, any initial vector that has either form
    $$ \textnormal{\textbf{v}}^{(1)} = \begin{bmatrix}
        \textnormal{\textbf{v}}_u \\
        \textnormal{\textbf{0}}_{(n_2)}
    \end{bmatrix}  \quad \textit{or} \quad \textnormal{\textbf{v}}^{(1)} = \begin{bmatrix}
        \textnormal{\textbf{0}}_{(n_1)} \\
        \textnormal{\textbf{v}}_d
    \end{bmatrix}$$
    with real vector $\textnormal{\textbf{v}}_u \in \mathbb{R}^{n_1}$, $\textnormal{\textbf{v}}_d \in \mathbb{R}^{n_2}$ and zero vectors $\textnormal{\textbf{0}}_{(n_1)}\in\mathbb{R}^{n_1}, \textnormal{\textbf{0}}_{(n_2)}\in\mathbb{R}^{n_2}$ guarantees an $r$-partial absolute palindrome $\boldsymbol{\mu}^{(1)} = \textnormal{\textbf{Q}}^T \textnormal{\textbf{v}}^{(1)}$.
\end{theorem}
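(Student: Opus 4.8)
The plan is to read the measure vector $\boldsymbol{\mu}^{(1)} = \textbf{Q}^T\textbf{v}^{(1)}$ off the explicit Jordan--Wielandt eigenbasis supplied by Theorem \ref{thm:m=n}, and then simply observe that its first $r/2$ entries and its last $r/2$ entries coincide up to sign. First I would record two bookkeeping facts: since $\textbf{A}$ is a Jordan--Wielandt matrix, $r = \mathrm{rank}(\textbf{A}) = 2\,\mathrm{rank}(\textbf{B}) =: 2r_{\textbf{B}}$, so $r$ is automatically even (as Definition \ref{Def:sav} requires); and by Theorem \ref{thm:m=n} the $2r_{\textbf{B}}$ nonzero eigenvalues of $\textbf{A}$ are exactly $\pm\sigma_1,\ldots,\pm\sigma_{r_{\textbf{B}}}$ with the remaining $n-r$ eigenvalues equal to $0$, where $n = n_1+n_2$ and everything is real so $\textbf{Q}^H = \textbf{Q}^T$.

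Next I would fix the ordering compatible with the nondecreasing diagonal of $\mathbf{\Lambda}$: place the negative eigenvalues $-\sigma_{\pi(1)} \le \cdots \le -\sigma_{\pi(r_{\textbf{B}})}$ in positions $1,\ldots,r_{\textbf{B}}$ and the matching positives $\sigma_{\pi(r_{\textbf{B}})} \le \cdots \le \sigma_{\pi(1)}$ in positions $n-r_{\textbf{B}}+1,\ldots,n$ (reversed), so that positions $j$ and $n+1-j$ carry the opposite-sign pair $\mp\sigma_{\pi(j)}$ for $j = 1,\ldots,r_{\textbf{B}}$, while the middle positions carry the zero eigenvalue. By Theorem \ref{thm:m=n}, the $j$-th eigenvector under this ordering is $\textbf{q}_j = \tfrac{1}{\sqrt{2}}[\textbf{u}_{\pi(j)};\,-\textbf{v}_{\pi(j)}]$ and $\textbf{q}_{n+1-j} = \tfrac{1}{\sqrt{2}}[\textbf{u}_{\pi(j)};\,\textbf{v}_{\pi(j)}]$ for $j = 1,\ldots,r_{\textbf{B}}$, with the middle columns (spanning $\ker\textbf{A}$) of the block forms $[\textbf{U}_2;\,\textbf{0}]$ and $[\textbf{0};\,\textbf{V}_2]$, which will play no role. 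Here $\textbf{u}_i,\textbf{v}_i$ are the $i$-th columns of $\textbf{U}_1,\textbf{V}_1$.

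Then the computation is immediate. For $\textbf{v}^{(1)} = [\textbf{v}_u;\,\textbf{0}_{(n_2)}]$ the bottom block is annihilated, so $\mu^{(1)}_j = \textbf{q}_j^T\textbf{v}^{(1)} = \tfrac{1}{\sqrt{2}}\textbf{u}_{\pi(j)}^T\textbf{v}_u = \mu^{(1)}_{n+1-j}$ for all $j = 1,\ldots,r_{\textbf{B}}$; for $\textbf{v}^{(1)} = [\textbf{0}_{(n_1)};\,\textbf{v}_d]$ the top block is annihilated, so $\mu^{(1)}_j = -\tfrac{1}{\sqrt{2}}\textbf{v}_{\pi(j)}^T\textbf{v}_d = -\mu^{(1)}_{n+1-j}$. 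In either case $|\mu^{(1)}_j| = |\mu^{(1)}_{n+1-j}|$ for $j = 1,\ldots,r/2$, which is exactly the definition of an $r$-partial absolute palindrome; the $n-r$ middle entries of $\boldsymbol{\mu}^{(1)}$ stay unconstrained, as they must. I would close by noting that, combined with Lemma \ref{Lem:Suf} and Theorem \ref{Thm:main}, this shows such initial vectors actually produce a genuinely symmetric Lanczos quadrature.

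I do not expect a serious obstacle; the substance is entirely in unwinding Theorem \ref{thm:m=n}. The only point needing care is the ordering bookkeeping — guaranteeing that the $\pm\sigma_i$ pairs land in mirror positions $j \leftrightarrow n+1-j$ once $\mathbf{\Lambda}$ is forced to be nondecreasing — together with the remark that a repeated singular value leaves a harmless orthogonal freedom in the choice of $\textbf{Q}$ (the quadrature itself depending only on $\textbf{A}$ and $\textbf{v}^{(1)}$), so that the recipe above still yields an admissible eigenbasis.
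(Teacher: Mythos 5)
Your proposal is correct and follows essentially the same route as the paper's proof: both unwind the explicit Jordan--Wielandt eigendecomposition of Theorem \ref{thm:m=n}, arrange the $\pm\sigma_i$ pairs in mirror positions compatible with the nondecreasing ordering of $\mathbf{\Lambda}$, and observe that annihilating one block of $\textbf{v}^{(1)}$ forces $\mu^{(1)}_j = \pm\mu^{(1)}_{n+1-j}$ for $j \le r/2$. The paper expresses the same computation in block form (via $\textbf{Q}_{13}^T = \textbf{P}_{(r_{\textbf{B}})}\textbf{Q}_{11}^T$ and $\textbf{Q}_{23}^T = -\textbf{P}_{(r_{\textbf{B}})}\textbf{Q}_{21}^T$) rather than entrywise, but the content is identical.
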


\begin{proof}
    Based on the block form of $\textbf{A}$, it is trivial that $r = 2r_{\textbf{B}}$, where $r_{\textbf{B}}$ is the rank of $\textbf{B}$. Based on suitable permutation, $\textbf{A}$ can be factorized as $\textbf{A} = \textbf{Q} \mathbf{\Lambda}\textbf{Q}^T$, where the block forms of $\mathbf{\Lambda}$ and $\textbf{Q}^T$ read

    \vspace{0.5cm}

    \NiceMatrixOptions{columns-width=12mm}

    \begin{equation*}
        \mathbf{\Lambda} = \begin{bNiceArray}{c|c|c}[margin,last-col]
\mathbf{\Lambda}_{11} &  &  & \quad r_{\textbf{B}} \\ \Hline
 &  \Block{1-1}{} &  & \quad n_1+n_2-r \\ \Hline
 & & \mathbf{\Lambda}_{33} &  \quad r_{\textbf{B}}
\CodeAfter
 \OverBrace[yshift=1.5mm]{1-1}{1-1}{r_{\textbf{B}}}
 \OverBrace[yshift=1.5mm]{1-2}{1-2}{n_1+n_2-r}
 \OverBrace[yshift=1.5mm]{1-3}{1-3}{r_{\textbf{B}}}
 \SubMatrix{.}{1-1}{1-3}{\rbrace}[xshift=5mm]
 \SubMatrix{.}{2-1}{2-3}{\rbrace}[xshift=5mm]
 \SubMatrix{.}{3-1}{3-3}{\rbrace}[xshift=5mm]
\end{bNiceArray}
    \end{equation*}
\vspace{0.4cm}\
    \NiceMatrixOptions{columns-width=10mm}
    \NiceMatrixOptions{cell-space-limits = 6pt}
    \begin{equation*}
        \textbf{Q}^T = \frac{1}{\sqrt{2}}\begin{bNiceArray}{c|c}[margin,last-col]
\textbf{Q}_{11}^T & \textbf{Q}_{21}^T & \quad r_{\textbf{B}} \\ \Hline
 \textbf{Q}_{12}^T & \textbf{Q}_{22}^T  & \quad n_1+n_2-r \\ \Hline
 \textbf{Q}_{13}^T & \textbf{Q}_{23}^T  & \quad r_{\textbf{B}} \\ 
\CodeAfter
 \OverBrace[yshift=1.5mm]{1-1}{1-1}{n_1}
 \OverBrace[yshift=1.5mm]{1-2}{1-2}{n_2}
 \SubMatrix{.}{1-1}{1-2}{\rbrace}[xshift=5mm]
 \SubMatrix{.}{2-1}{2-2}{\rbrace}[xshift=5mm]
 \SubMatrix{.}{3-1}{3-2}{\rbrace}[xshift=5mm]
\end{bNiceArray}
    \end{equation*}
    Note that the $3\times3$ block form of $\mathbf{\Lambda}$ is reduced to $2\times 2$ and $3\times 2$ block of $\textbf{Q}^T$ turns $2\times 2$ if $r = n_1 + n_2$. For nondecreasing diagonal $\mathbf{\Lambda}$, one may set $\mathbf{\Lambda}_{33} = -\textbf{P}_{(r_{\textbf{B}})}\mathbf{\Lambda}_{11}$ without loss of generality, where $\textbf{P}_{(r_{\textbf{B}})}$ is the $r_{\textbf{B}}\times r_{\textbf{B}}$ anti-diagonal identity matrix. Then $\textbf{Q}_{13}^T=\textbf{P}_{(r_{\textbf{B}})}\textbf{Q}_{11}^T$ and $\textbf{Q}_{23}^T = -\textbf{P}_{(r_{\textbf{B}})} \textbf{Q}_{21}^{T}$. Denote $\textbf{v}^{(1)} = \begin{bmatrix}
    \textbf{v}_u \\ \textbf{v}_d
\end{bmatrix} \in \mathbb{R}^{n_1 + n_2}$ with $\textbf{v}_u \in \mathbb{R}^{n_1}, \textbf{v}_d \in \mathbb{R}^{n_2}$. Then $\boldsymbol{\mu}^{(1)}$ is represented as 
    $$
    \begin{aligned}
        \boldsymbol{\mu}^{(1)} = \textbf{Q}^T \textbf{v}^{(1)} & = \begin{bmatrix}
        \textbf{Q}_{11}^T & \quad \textbf{Q}_{21}^T \\
        \textbf{Q}_{12}^T & \quad \textbf{Q}_{22}^T \\
       \textbf{P}_{(r_{\textbf{B}})} \textbf{Q}_{11}^T & \quad-\textbf{P}_{(r_{\textbf{B}})} \textbf{Q}_{21}^T
    \end{bmatrix} \begin{bmatrix}
            \textbf{v}_u \\
            \textbf{v}_d
        \end{bmatrix} \\
        & =  \begin{bmatrix}
            \textbf{Q}_{11}^T \textbf{v}_u + \textbf{Q}_{21}^T \textbf{v}_d \\
            \textbf{Q}_{12}^T \textbf{v}_u + \textbf{Q}_{22}^T \textbf{v}_d \\
            \textbf{P}_{(r_{\textbf{B}})}(\textbf{Q}_{11}^T \textbf{v}_u - \textbf{Q}_{21}^T \textbf{v}_d)
        \end{bmatrix}.
    \end{aligned}
    $$
    One convenient and economic (by saving memory) choice is to take $\textbf{v}_u = \textbf{0}_{(n_1)} \in \mathbb{R}^{n_1}$ or $\textbf{v}_d = \textbf{0}_{(n_2)} \in \mathbb{R}^{n_2}$ to ensure an $r$-partial absolute palindrome $\boldsymbol{\mu}^{(1)}$.
    \qed
\end{proof}

\subsection{Computational advantages through bidiagonalization}

So far we have discussed sufficient conditions for symmetric Lanczos quadrature and how to guarantee the symmetry of quadrature for Jordan-Wielandt matrices. Now we show how to reduce the computational burden in the Lanczos process when $\textbf{B}$ in $\textbf{A} = \begin{bmatrix}
     & \textbf{B} \\ \textbf{B}^T & 
\end{bmatrix}$ is accessible. First, recall the Lanczos bidiagonalization algorithm (see Algorithm \ref{alg:LancBidi}) that reduces a given matrix $\textbf{B}\in\mathbb{R}^{n_1\times n_2}$ to a bidiagonal matrix $\textbf{J}_{(m)},\in\mathbb{R}^{m \times m}$, $m\le \min\{n_1,n_2\}$. Two sequences of orthonormal vectors $\{\textbf{u}^{(k)}\}_{k=1}^{m}$ and $\{\textbf{v}^{(k)}\}_{k=1}^{m}$ are generated in the process, with their norms stored in the diagonals and off-diagonals of $\textbf{J}_{(m)}$ respectively. We observe that the sequence of interlaced elements $\{\alpha_1,\beta_1,\alpha_2,\beta_2,\ldots,\beta_{m-1},\alpha_{m}\}$ in $\textbf{J}_{(m)}$ generated by the $m$-step Lanczos bidiagonalization with $\textbf{B}$ (resp. $\textbf{B}^T$) and $\textbf{v}$ are equivalent to the off-diagonals of $\textbf{T}_{(2m)}$ generated by the $2m$-step Lanczos algorithm with $\textbf{A}$ and $[\textbf{v}_{(n_1)}; \textbf{0}]$ (resp. $[\textbf{0};\textbf{v}_{(n_2)}]$). Meanwhile, thanks to Lemma \ref{Lem:Suf} and Theorem \ref{prop:2}, the diagonal entries of $\textbf{T}_{(2m)}$ are all zeros. Then 
$$\begin{bmatrix}
     & \textbf{J}_{(m)}\\
     \textbf{J}_{(m)}^T & \end{bmatrix}$$
can be transformed to $\textbf{T}_{(2m)}$ through proper permutations. Golub and Kahan have discussed the relationship between the eigenpairs $\{\theta_k,\boldsymbol{\psi}_k\}_{k=1}^{2m}$ of $\textbf{T}_{(2m)}$ and the singular triplets $\{\delta_k,\textbf{x}_k,\textbf{y}_k\}_{k=1}^m$ of $\textbf{J}_{(m)}$ \cite[p.212-213]{G65}. Recall that $\{\theta_k\}_{k=1}^{2m} = \{\pm\delta_k\}_{k=1}^m$ are the eigenvalues of $\textbf{T}_{(2m)}$, due to Theorem \ref{thm:m=n}. Then, letting $\psi_{2i} = x_i$ and $\psi_{2i-1} = \pm y_i$, we have
$$ \textbf{T}_{(2m)}\boldsymbol{\psi}=\theta\boldsymbol{\psi} = \pm \delta\boldsymbol{\psi}.$$
Such an equivalence demonstrates that when $\textbf{A}$ is a Jordan-Wielandt matrix and one wishes to guarantee a symmetric Lanczos quadrature, he/she can directly conduct the Lanczos bidiagonalization on $\textbf{B}$ to save memory and reduce time on solving the half-scale singular value problem. In addition, in the calculation of the $2m$-node Lanczos quadrature, the quadrature nodes are the pairs of $\pm$ singular values of $\textbf{J}_{(m)}$ and the quadrature weights $[\tau_1,\ldots,\tau_m] = \frac{1}{2}\left(\textbf{e}_1^T \textbf{Y}_{(m)}\right)\odot\left(\textbf{e}_1^T \textbf{Y}_{(m)}\right)$, where $\textbf{Y}_{(m)}$ is the orthonormal matrix that stores the right singular vectors of $\textbf{J}_{(m)}$.

However, when explicit access to $\textbf{B}$ is not available, for instance, when $\textbf{A}$ is provided only as a black-box matrix-vector multiplication routine, one is constrained to use the standard Lanczos tridiagonalization on $\textbf{A}$.

\begin{algorithm}[t]
        \raggedright
	\caption{Lanczos Bidiagonalization Algorithm} 
	\label{alg:LancBidi}
	\hspace*{0.02in} {\bf Input:} Matrix $\textbf{B} \in \mathbb{R}^{n_1 \times n_2}$, vector $\textbf{v} \in \mathbb{R}^{n_2}$, number of iterations $m$.\\
	\hspace*{0.02in} {\bf Output:} Bidiagonal matrix $\textbf{J}_{(m)} \in \mathbb{R}^{m \times m}$.
	\begin{algorithmic}[1]
	    \State $\textbf{v}^{(1)} = \textbf{v}/\Vert \textbf{v} \Vert_2$ 
        \State $\beta_0 = 0$ 
        \State $\textbf{u}^{(0)} = \mathbf{0}$ 
	    \For{$k = 1$ \textbf{to} $m$}
			\State $\textbf{u} = \textbf{B} \textbf{v}^{(k)} - \beta_{k-1} \textbf{u}^{(k-1)}$ 
			\State $\alpha_k = \Vert \textbf{u} \Vert_2$
			\If{$\alpha_k == 0$}
                \State $m = k-1$ 
                \State \textbf{break}
            \EndIf 
                \State $\textbf{u}^{(k)} = \textbf{u} / \alpha_k$
			\If{$k < m$} 
			    \State $\textbf{v} = \textbf{B}^T \textbf{u}^{(k)} - \alpha_k \textbf{v}^{(k)}$
			    \State $\beta_k = \Vert \textbf{v} \Vert_2$
			    \If{$\beta_k == 0$}
                    \State $m = k$ 
                    \State \textbf{break}
                \EndIf 
                \State $\textbf{v}^{(k+1)} = \textbf{v} / \beta_k$
			\EndIf
		\EndFor
        \State \textbf{end for}
        \State \Return $\textbf{J}_{(m)} = \begin{bmatrix}
        \alpha_1 & \beta_1 & 0 & \cdots & 0 \\
        0 & \alpha_2 & \beta_2 & \ddots & \vdots \\
        \vdots & \ddots & \ddots & \ddots & 0 \\
        0 & \cdots & 0 & \alpha_{m-1} & \beta_{m-1} \\
        0 & \cdots & 0 & 0 & \alpha_m
        \end{bmatrix}$
	\end{algorithmic}
\end{algorithm}

\section{Application: estimation of the Estrada index}
Given an adjacency matrix $\textbf{A} \in \mathbb{R}^{n \times n}$ with respect to a graph $G$ of order $n$ and a real parameter $\beta$, the Estrada index (EI)
\begin{equation}
    \label{eq:estrada}
    EI(\textbf{A},\beta) = \sum_{i=1}^n e^{\beta \lambda_i} = \mathrm{tr}(e^{\beta \textbf{A}})
\end{equation}
is an important indicator that measures the complexity, connectivity and robustness of networks in applications ranging from chemistry and molecular design, protein structure analysis, and complex network analysis to social science \cite{E00,BB10,E12,K14}. The Hutchinson trace estimator \cite{H90} estimates $\mathrm{tr}(f(\textbf{A}))$ by
\begin{equation}
    \label{eq:Hutchinson}
    \mathrm{tr}(f(\textbf{A})) = \sum_{i=1}^{n}[f(\textbf{A})]_{ii} \approx \frac{1}{N}\sum_{k=1}^N {\textbf{z}_k}^T f(\textbf{A}) \textbf{z}_k,
\end{equation}
where the entries of $\textbf{z}_k \in \mathbb{R}^n$ independently follow the Rademacher distribution, i.e., every element takes a value of $\pm 1$ with a probability of $1/2$ for each, and $\mathbb{E}[\textbf{z}_k\textbf{z}_k^T] = \textbf{I}$. The Gaussian trace estimator or normalized Rayleigh-quotient trace estimator also helps approximate the trace \cite{AT11}. The quadratic forms in \eqref{eq:Hutchinson} can be approximated by the Gauss quadrature \cite{UCS17} as described by Algorithm \ref{alg:Lanc}. 

For general simple graphs, no additional spectral information of their adjacency matrices is given, so Theorem \ref{Thm:main} is normally not applicable. Nevertheless, the property of a symmetric eigenvalue distribution can be captured for adjacency matrices in two cases.
\begin{itemize}
    \item In the analysis of layer-coupled multiplex networks, a directed network of $n$ nodes with an asymmetric adjacency matrix $\textbf{B} \in \mathbb{R}^{n_1\times n_1}$ can be extended to a bipartite undirected network with a symmetric \textit{block supra-adjacency matrix} via bipartization \cite{BB20,BEK13,BS22}.
    \item Graphs are undirected and bipartite \cite{Z09}. The numbers of nodes in the two sets are not necessarily equal. The full adjacency matrices of these types of graphs are also of the form \eqref{Eq:A}, where $\textbf{B} \in \mathbb{R}^{n_1 \times n_2}$.
\end{itemize}
For adjacency matrices of the form \eqref{Eq:A}, even if the vectors $\{\textbf{z}_k\}_{k=1}^N$ in the quadratic form (i.e., the initial vector of the Lanczos process) are chosen randomly, we can still guarantee, as discussed in Section \ref{sec:construction}, that the distribution of Ritz values remains symmetric at any iteration step $m \le m^*$. Based on Theorem \ref{prop:2}, we suggest the use of initial vectors with Rademacher entries ($\pm 1$) and zeros (represented by \textit{partial Rademacher} vectors in this context) rather than a complete Rademacher distributed vector to guarantee the symmetry of quadrature rules, thus increasing the convergence rate.

Furthermore, we care about the unbiasedness of such a trace estimator with a partial Rademacher vector. The discussion is carried out in two cases: $\textbf{B} \in \mathbb{R}^{n_1 \times n_2}$ is square ($n_1 = n_2$), and $\textbf{B}$ is non-square ($n_1 \neq n_2$) according to practical applications.

\subsection{Case 1: $n_1 = n_2$}

When $\textbf{B} = \textbf{U}\mathbf{\Sigma} \textbf{V}^T \in \mathbb{R}^{n_1\times n_1}$ is a square matrix, $f(\textbf{A})$ is decomposed as
    \NiceMatrixOptions{cell-space-limits = 6pt}
$$
\begin{aligned}
    f(\textbf{A}) &= \frac{1}{2} \begin{bmatrix}
    \textbf{U} & \textbf{U} \\
    \textbf{V} & -\textbf{V}
\end{bmatrix} \begin{bmatrix}
    f(\mathbf{\Sigma}) &  \\
     & f(-\mathbf{\Sigma})
\end{bmatrix} \begin{bmatrix}
    \textbf{U}^T & \textbf{V}^T \\
    \textbf{U}^T & -\textbf{V}^T
\end{bmatrix} \\
    &= \frac{1}{2} \begin{bNiceArray}{cc}[margin]
        \textbf{U}\left( f(\mathbf{\Sigma}) + f(-\mathbf{\Sigma})\right)\textbf{U}^T & \quad \textbf{U}\left(f(\mathbf{\Sigma}) - f(-\mathbf{\Sigma})\right)\textbf{V}^T \\
        \textbf{V}\left(f(\mathbf{\Sigma}) - f(-\mathbf{\Sigma})\right)\textbf{U}^T & \quad \textbf{V}(f(\mathbf{\Sigma}) + f(-\mathbf{\Sigma}))\textbf{V}^T
    \end{bNiceArray}
\end{aligned}
$$
and the trace is represented as
$$\begin{aligned}
    \mathrm{tr}(f(\textbf{A})) &= 2\cdot \frac{1}{2}\cdot\mathrm{tr}\left(\textbf{U}\left(f(\mathbf{\Sigma}) + f(-\mathbf{\Sigma})\right)\textbf{U}^T\right)\\ &= \mathbb{E}\left[\textbf{z}_{(n_1)}^T \textbf{U}\left(f(\mathbf{\Sigma}) + f(-\mathbf{\Sigma})\right)\textbf{U}^T\textbf{z}_{(n_1)} \right].
\end{aligned}$$
The first equation stems from the cyclic property of the trace operator, which ensures that the two diagonal blocks share identical traces, specifically, $$\mathrm{tr}\left(\textbf{U}\left(f(\mathbf{\Sigma}) + f(-\mathbf{\Sigma})\right)\textbf{U}^T\right) = \mathrm{tr}\left(\textbf{V}\left(f(\mathbf{\Sigma}) + f(-\mathbf{\Sigma})\right)\textbf{V}^T\right).$$
The second equation is valid because of the unbiasedness of the Hutchinson trace estimator, when a Rademacher distributed vector $\textbf{z}_{(n_1)} \in \mathbb{R}^{n_1}$ with mean of $0$ and variance of $1$ \cite{H90} is utilized.

As suggested in Theorem \ref{prop:2}, we consider employing the partial Rademacher vector (denoted by $\tilde{\textbf{z}}$) instead of the complete Rademacher vector to guarantee the symmetry of the quadrature rule. One may set upper partial Rademacher vector $\tilde{\textbf{z}} = \begin{bmatrix}
    \textbf{z}_{(n_1)} ; \textbf{0}_{(n_1)}
\end{bmatrix}$ or lower vector $\tilde{\textbf{z}} = \begin{bmatrix}
    \textbf{0}_{(n_1)}; \textbf{z}_{(n_1)}
\end{bmatrix}$ since the diagonal blocks have the same trace. The expectation of the corresponding quadratic form is
$$
    \mathbb{E}\left[\tilde{\textbf{z}}^T f(\textbf{A}) \tilde{\textbf{z}}\right]  = \frac{1}{2} \mathbb{E}\left[\textbf{z}_{(n_1)}^T \textbf{U} (f(\mathbf{\Sigma}) + f(-\mathbf{\Sigma}))\textbf{U}^T \textbf{z}_{(n_1)} \right] = \frac{1}{2} \mathrm{tr}(f(\textbf{A})).
$$
This indicates that the estimator of $\mathrm{tr}(f(\textbf{A}))$ by the partial Rademacher vector requires doubling but retains unbiasedness, which reads
\begin{equation}
    \label{eq:ste0}
    \mathrm{tr}(f(\textbf{A}))^{\dagger}=\frac{2}{N}\sum_{k=1}^N\tilde{\textbf{z}}_k^Tf(\textbf{A})\tilde{\textbf{z}}_k. 
\end{equation}

\subsection{Case 2: $n_1 \neq n_2$}
Suppose the rank of $\textbf{A}$ is $r$. Based on Theorem \ref{thm:m=n} and under the assumption that $\textbf{B} = \begin{bmatrix}
    \textbf{U}_1 & \textbf{U}_2
\end{bmatrix}\begin{bmatrix}
    \mathbf{\Sigma}_1 & \textbf{O} \\
    \textbf{O} & \textbf{O}
\end{bmatrix}\begin{bmatrix}
    \textbf{V}_1^T \\ \textbf{V}_2^T
\end{bmatrix} \in \mathbb{R}^{n_1 \times n_2}$, where $\textbf{U}_1 \in \mathbb{R}^{n_1 \times r}, \textbf{U}_2\in \mathbb{R}^{n_1 \times (n_1-r)}, \textbf{V}_1 \in \mathbb{R}^{n_2 \times r}, \textbf{V}_2 \in \mathbb{R}^{n_2\times(n_2 - r)}, \mathbf{\Sigma}_1 \in \mathbb{R}^{r\times r}$ and $\textbf{O}$ denote zero matrices of appropriate size, the diagonal blocks of $f(\textbf{A})$ are
$$ \frac{1}{2}\textbf{U}_1(f(\mathbf{\Sigma}_1) + f(-\mathbf{\Sigma}_1))\textbf{U}_1^T + \textbf{U}_2f(\textbf{O}_{(n_1-r)})\textbf{U}_2^T$$
and
$$ \frac{1}{2}\textbf{V}_1(f(\mathbf{\Sigma}_1) + f(-\mathbf{\Sigma}_1))\textbf{V}_1^T + \textbf{V}_2 f(\textbf{O}_{(n_2-r)}) \textbf{V}_2^T.$$

Then, owing to the cyclic property of the trace,
$$ \begin{aligned}
    \mathrm{tr}(f(\textbf{A})) &= 2\cdot \frac{1}{2} \cdot \mathrm{tr}\left( \textbf{V}_1(f(\mathbf{\Sigma}_1) + f(-\mathbf{\Sigma}_1))\textbf{V}_1^T\right) + \mathrm{tr}\left(\textbf{U}_2 f(\textbf{O}_{(n_1-r)}) \textbf{U}_2^T\right) \\
    & \quad+ \mathrm{tr}\left(\textbf{V}_2 f(\textbf{O}_{(n_2-r)}) \textbf{V}_2^T\right) \\
    & = \mathrm{tr}\left(f(\mathbf{\Sigma}_1) + f(-\mathbf{\Sigma}_1)\right) + (n_1+n_2-2r)\cdot f(0).
\end{aligned} $$
Similar to the mathematical derivation in the previous case, if we use a partial Rademacher vector $\tilde{\textbf{z}} = \begin{bmatrix}
    \textbf{0}_{(n_1)}; \textbf{z}_{(n_2)}
\end{bmatrix}$ with $\textbf{z}_{(n_2)} \in \mathbb{R}^{n_2}$, the double expectation of quadratic form is
$$ \begin{aligned}
2\mathbb{E} \left[\tilde{\textbf{z}}^Tf(\textbf{A})\tilde{\textbf{z}}\right]  &= \mathbb{E}\left[\textbf{z}_{(n_2)}^T\textbf{V}_1\left(f(\mathbf{\Sigma}_1) + f(-\mathbf{\Sigma}_1)\right)\textbf{V}_1^T\textbf{z}_{(n_2)} + 2\textbf{z}_{(n_2)}^T \textbf{V}_2 f(\textbf{O}_{(n_2-r)})\textbf{V}_2^T\textbf{z}_{(n_2)}\right] \\
    & = \mathrm{tr}\left(f(\mathbf{\Sigma}_1)+f(-\mathbf{\Sigma}_1)\right) + (2n_2-2r)\cdot f(0).
\end{aligned}$$
Then, 
$$\mathrm{tr}(f(\textbf{A})) = 2\mathbb{E}\left[\tilde{\textbf{z}}^T f(\textbf{A}) \tilde{\textbf{z}}\right] + (n_1 - n_2)\cdot f(0).$$
If $\tilde{\textbf{z}} = \begin{bmatrix}
    \textbf{z}_{(n_1)}; \textbf{0}_{(n_2)}
\end{bmatrix}$, then
$$\mathrm{tr}(f(\textbf{A})) = 2\mathbb{E}\left[\tilde{\textbf{z}}^T f(\textbf{A}) \tilde{\textbf{z}}\right] - (n_1 - n_2)\cdot f(0).$$
This shows that an unbiased estimate of $\mathrm{tr}(f(\textbf{A}))$ can be obtained by first randomly generating a partial Rademacher vector in the Lanczos process with $\textbf{A}$ and then doubling the results of the Lanczos quadrature and adding/subtracting a constant term.

In general, for any $r \le \min\{n_1, n_2\}$, two stochastic trace estimators with $N$ randomly generated upper partial Rademacher vectors $\tilde{\textbf{z}} = \begin{bmatrix}
    \textbf{z}_{(n_1)}; \textbf{0}_{(n_2)}
\end{bmatrix}$ 
\begin{equation}
    \label{eq:ste1}
    \mathrm{tr}(f(\textbf{A}))^{\dagger} = \frac{2}{N}\sum_{i=1}^N \tilde{\textbf{z}}_i^T f(\textbf{A}) \tilde{\textbf{z}}_i + (n_2 - n_1)\cdot f(0)
\end{equation}
or lower partial Rademacher vectors $\tilde{\textbf{z}} = \begin{bmatrix}
    \textbf{0}_{(n_1)}; \textbf{z}_{(n_2)}
\end{bmatrix}$
\begin{equation}
    \label{eq:ste2}
    \mathrm{tr}(f(\textbf{A}))^{\dagger} = \frac{2}{N}\sum_{i=1}^N \tilde{\textbf{z}}_i^T f(\textbf{A}) \tilde{\textbf{z}}_i + (n_1 - n_2)\cdot f(0)
\end{equation}
are unbiased.


\section{Numerical experiments}
\label{sec:experiments}
Numerical experiments are conducted in MATLAB R2025a. Code are available on \url{https://github.com/Justin9872/Tests-SLQ-for-STE}.

\subsection{Test on Theorem \ref{Thm:main}}
\label{sec:TestThmMain}
The first three cases study the reproducible matrix $\textbf{A} = \textbf{H} \mathbf{\Lambda} \textbf{H}^T \in \mathbb{R}^{n\times n}$ with the prescribed diagonal matrix 
$$\mathbf{\Lambda} = \mathrm{diag}(\lambda_1, \lambda_2, \cdots, \lambda_n),$$
and the Householder matrix is constructed according to \cite{zhu09}
$$ \textbf{H} = \textbf{I} - \frac{2}{n}(\mathbf{1} \mathbf{1}^T), $$
where $n = 50$ is chosen for illustration. The fourth case focuses on the \texttt{nd3k} matrix from SuiteSparse Matrix Collection \cite{DH11}. The last two cases study the supra-adjacency matrix based on the \texttt{email} dataset \cite{DH11,PBL17} and the \texttt{web-cs-Stanford} dataset \cite{DH11}. Details of these 6 cases are shown in the following bullet points and Table \ref{Tab:4cases}.
\begin{itemize}
    \item[-] Case 1: $\{\lambda_i\}_{i=1}^{50} = \{i/50\}_{i=1}^{50}$, $\textbf{v} = \mathbf{1}/\sqrt{50}$;
    \item[-] Case 2: $\{\lambda_i\}_{i=1}^{50} = \{1/(51-i)\}_{i=1}^{50}$, $\textbf{v} = \mathbf{1}/\sqrt{50}$;
    \item[-] Case 3: $\{\lambda_i\}_{i=1}^{50} = \{i/50\}_{i=1}^{50}$, $\textbf{v} = (1,2,\cdots,50)^T/\Arrowvert (1,2,\cdots,50)^T \Arrowvert$;
    \item[-] Case 4: \texttt{nd3k} matrix, $\textbf{v} = (1,\cdots,1,-1,\cdots,-1)^T/\sqrt{9000} \in \mathbb{R}^{9000}$;
    \item[-] Case 5: \texttt{email} supra-adjacency matrix, $\textbf{v} = (1,\ldots,1,0,\ldots,0)^T/\sqrt{1005} \in \mathbb{R}^{2010}$;
    \item[-] Case 6: \texttt{web-cs-Stanford} supra-adjacency matrix, $\textbf{v} = (0,\ldots,0,1,\ldots,1)^T/\sqrt{9914} \in \mathbb{R}^{19828}$.
\end{itemize} 
\begin{table}[htbp]
    \caption{Details of the 6 cases with different eigenvalue distributions and starting vectors}
    \label{Tab:4cases}
    \begin{tabular}{|c|c|c|c|c|c|c|}
        \hline & Case 1 & Case 2 & Case 3 & Case 4 & Case 5 & Case 6\\ \hline
        symmetric eigenvalues of $\textbf{A}$? & Yes & No & Yes & No & Yes & Yes   \\ \hline
        absolute palindrome $\boldsymbol{\mu}^{(1)}$?  & Yes & Yes & No & No & Yes & Yes  \\ \hline
        symmetric Ritz values? & Yes & No & No & No & Yes & Yes \\ \hline
    \end{tabular}
\end{table}
Recall that the figure of $\mu(t)$ would be centro symmetric about $(\bar{\lambda}, \mu(\bar{\lambda}))$ when $\textbf{A}$ has a symmetric eigenvalue distribution and $\boldsymbol{\mu}^{(1)}$ is an absolute palindrome. Figure \ref{Fig:Test} shows the shapes of $\mu(t)$ for the 6 cases and the corresponding $10$ Ritz values generated by the Lanczos iteration, offering a visual representation that substantiates the validity of Theorem \ref{Thm:main} to a certain degree.

\begin{figure}[htbp]
    \centering
    \subfloat[Case 1]{\label{Fig:case1}\includegraphics[width = 0.5\textwidth]{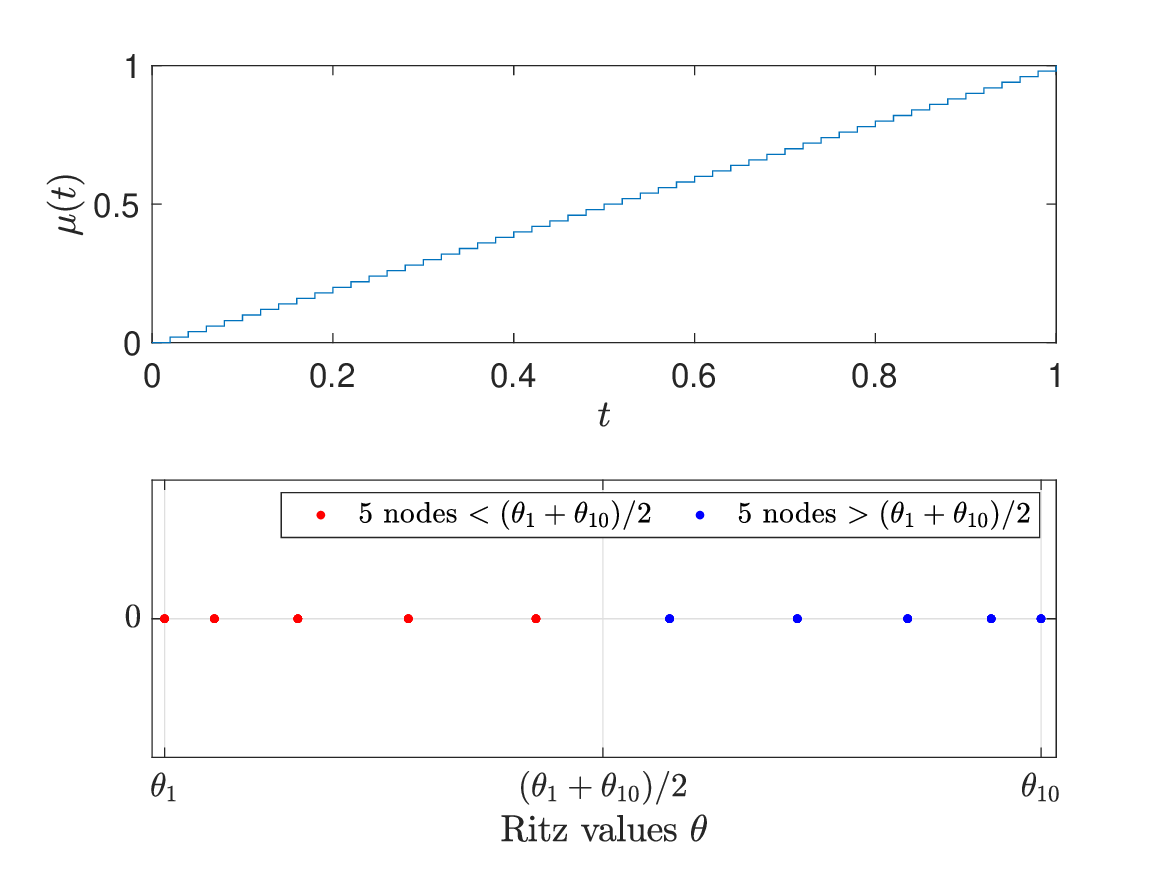}}
    \subfloat[Case 2]{\label{Fig:case2}\includegraphics[width = 0.5\textwidth]{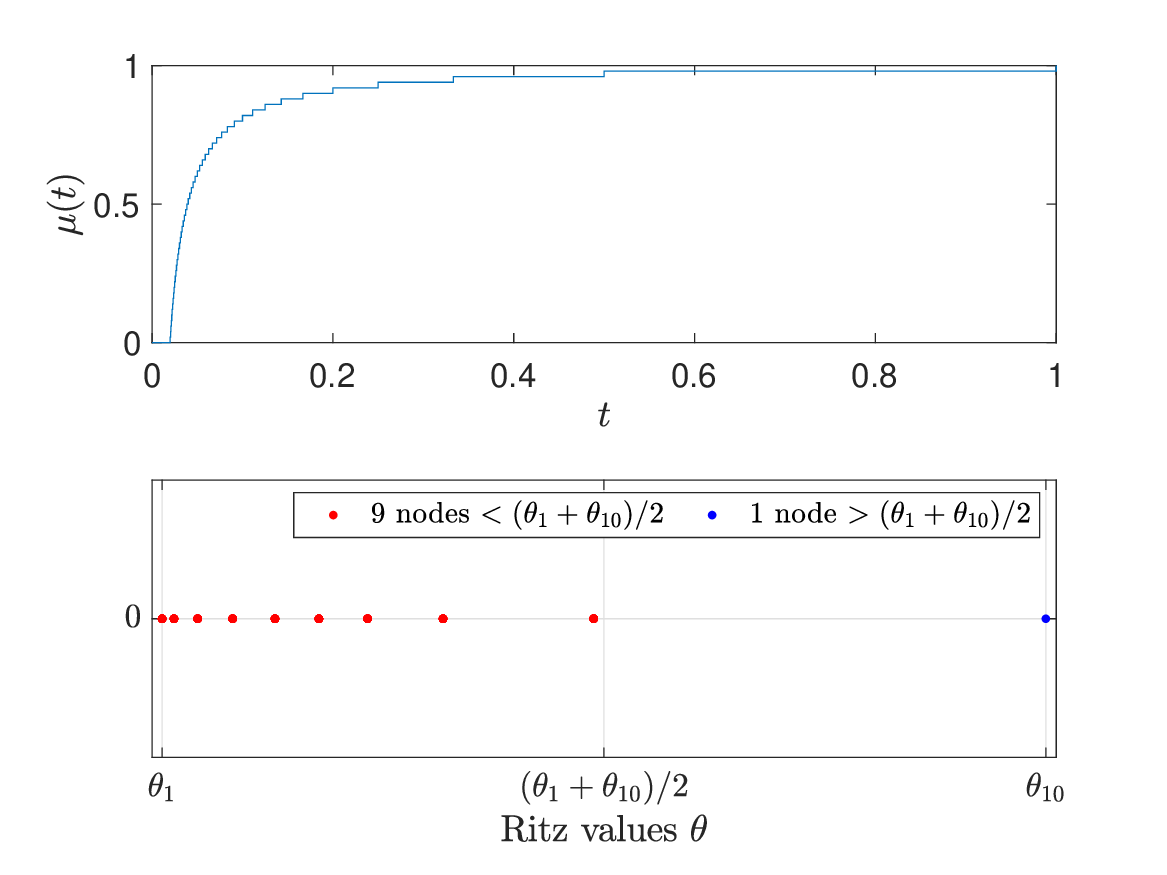}}\\
    \subfloat[Case 3]{\label{Fig:case3}\includegraphics[width = 0.5\textwidth]{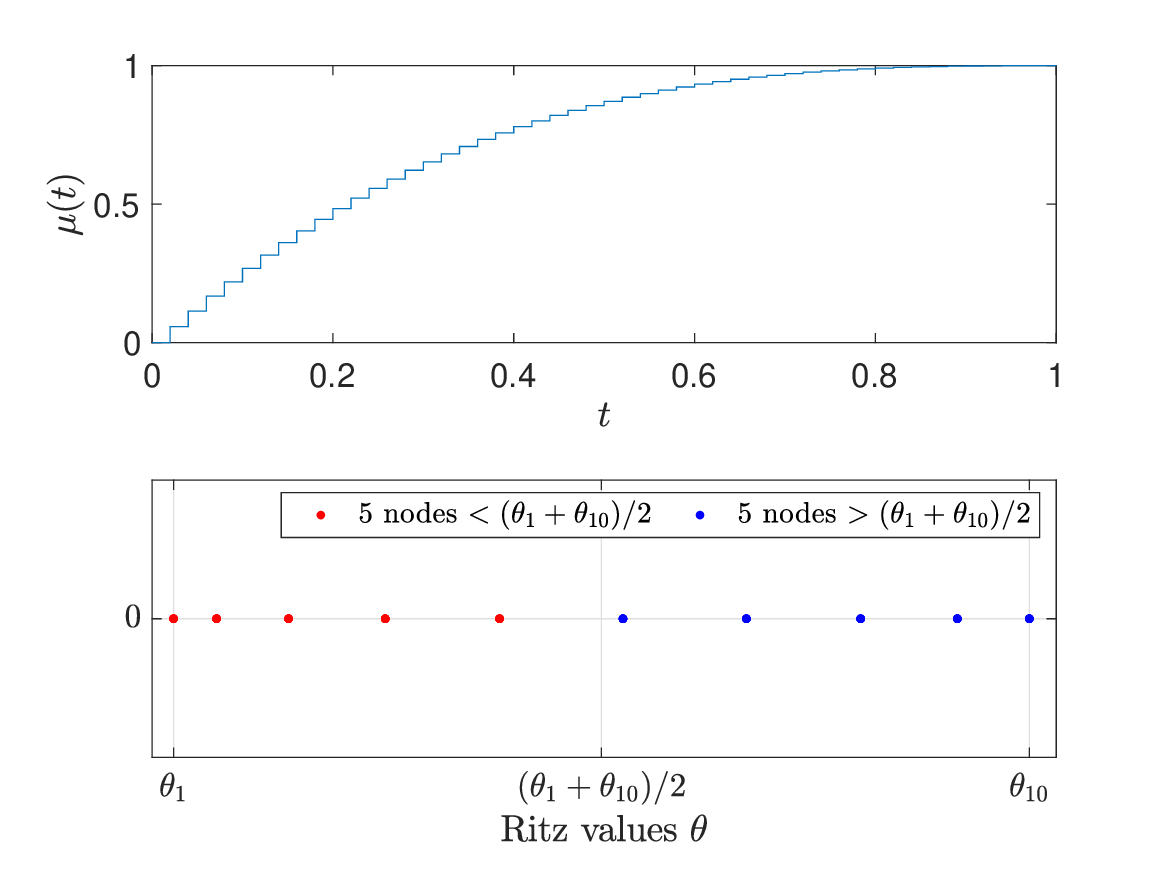}}
    \subfloat[Case 4]{\label{Fig:case4}\includegraphics[width = 0.5\textwidth]{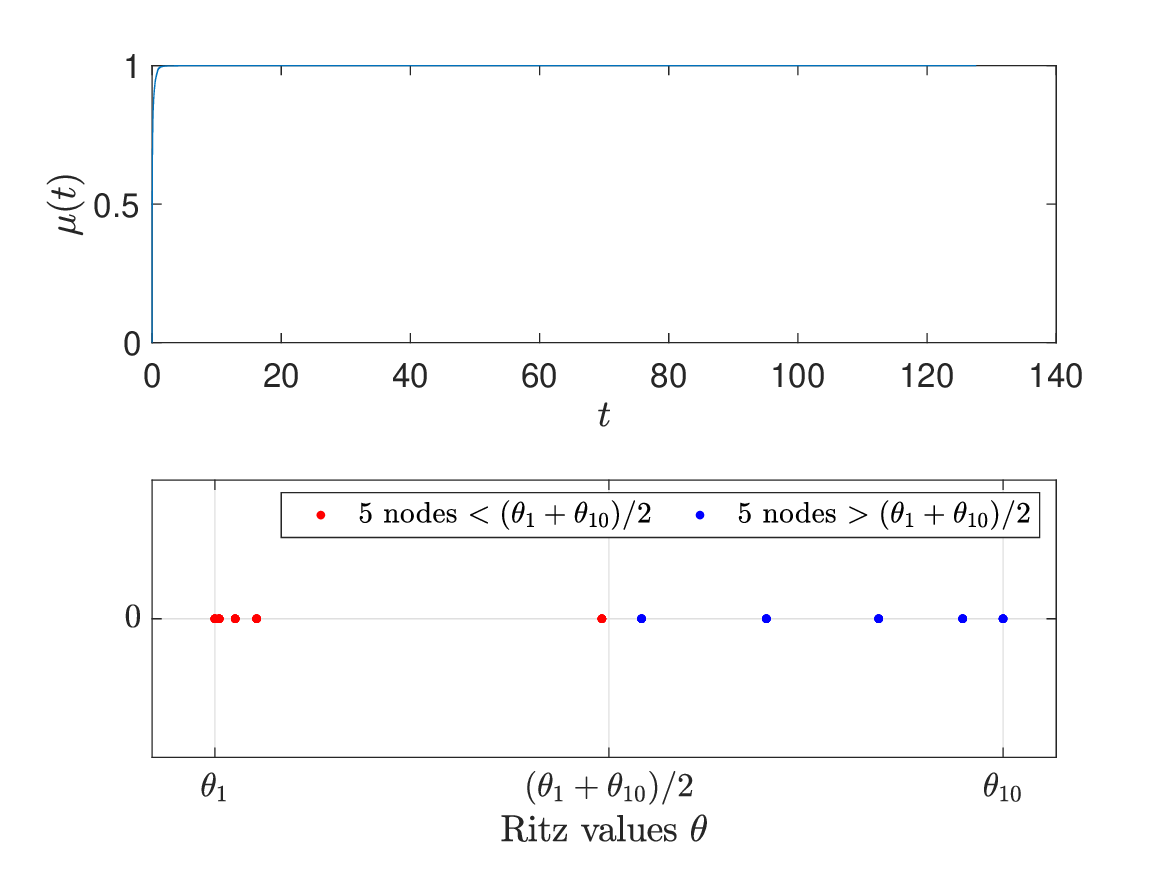}}\\
    \subfloat[Case 5]{\label{Fig:case5}\includegraphics[width = 0.5\textwidth]{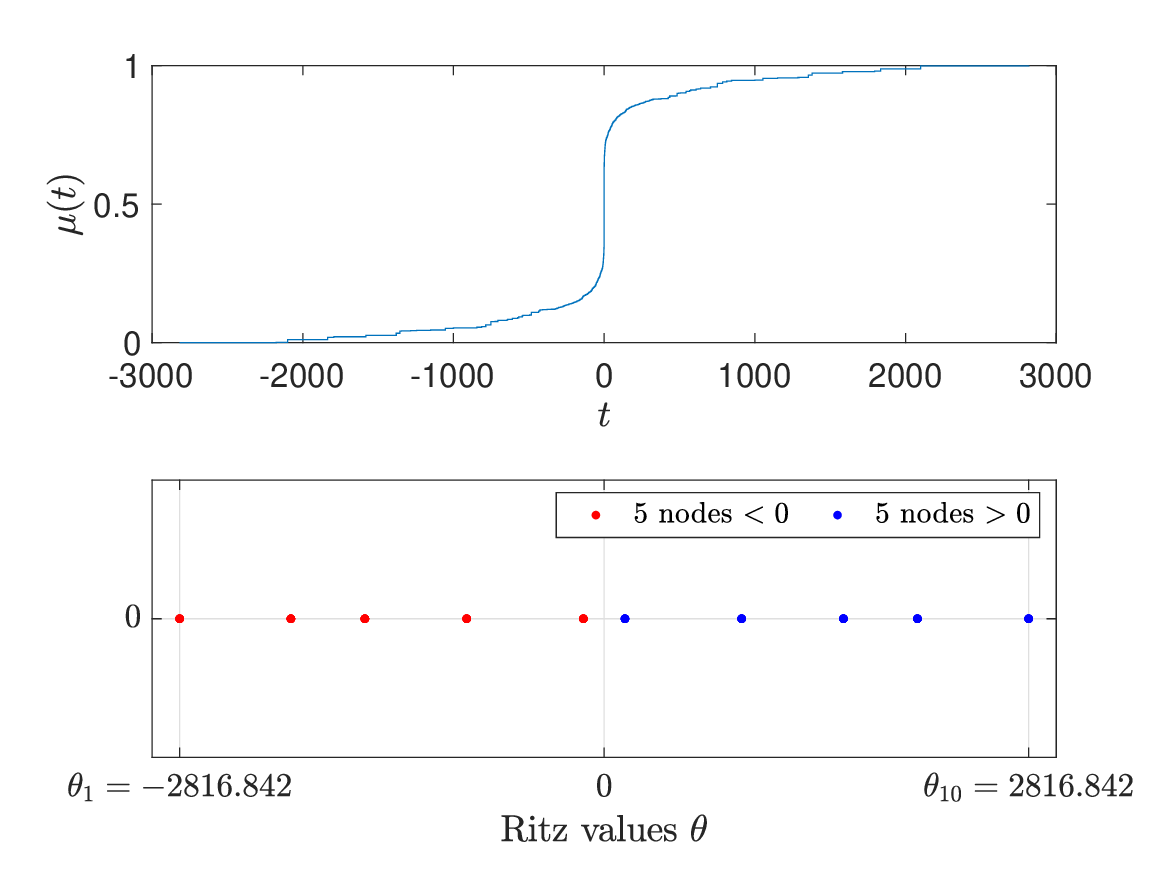}}
    \subfloat[Case 6]{\label{Fig:case6}\includegraphics[width = 0.5\textwidth]{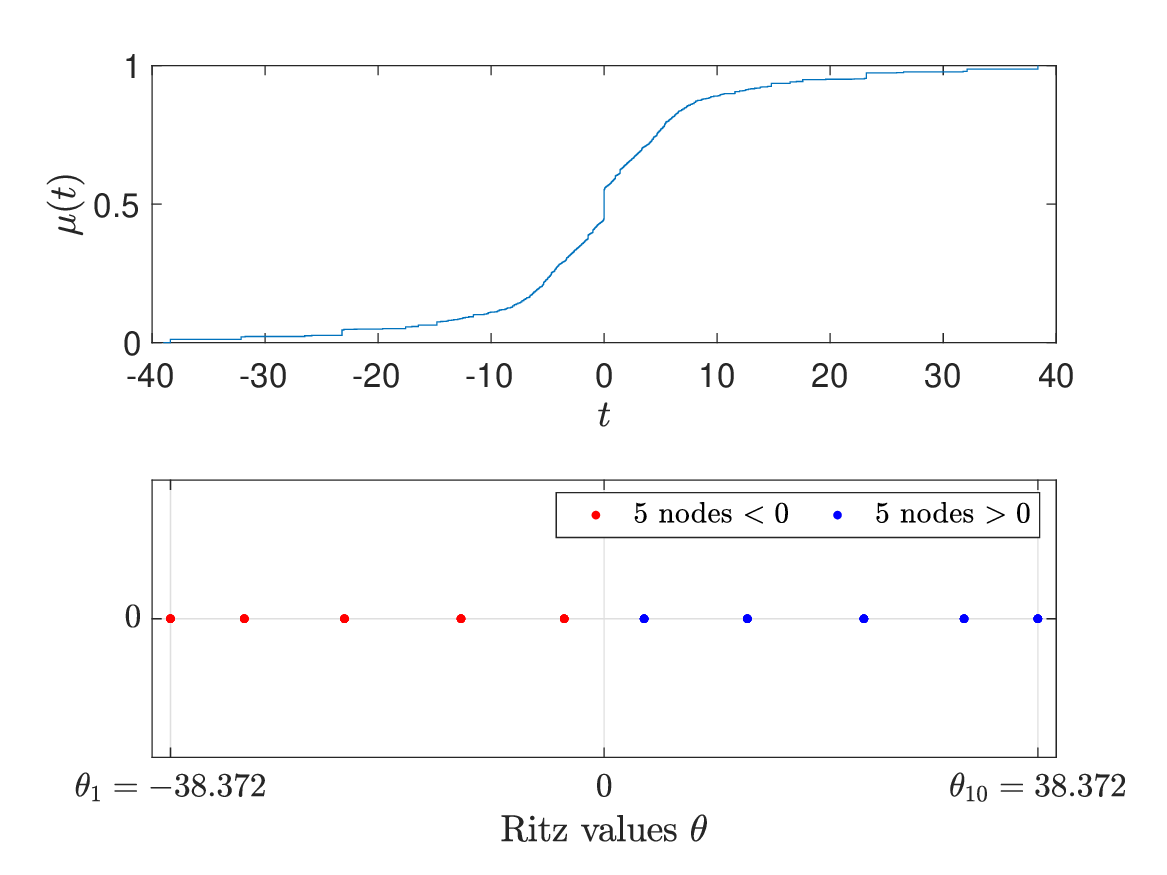}}\\
    \caption{Plots of discrete measure functions $\mu(t)$ \eqref{eq:measure_f} and the locations of Ritz values in six cases}
    \label{Fig:Test}
\end{figure}

\subsection{Test on Estrada index estimation}
\label{sec:estrada}
Numerical experiments are conducted to show that for estimating the Estrada index of directed or bipartite graphs, approximating quadratic forms in the stochastic trace estimator $\mathrm{tr}(f(\textbf{A}))^{\dagger}$ with upper or lower partial Rademacher distributed initial vectors (see \eqref{eq:ste1} and \eqref{eq:ste2}) results in lower variance compared to full Rademacher initial vectors. Furthermore, under identical computational budgets, the proposed estimator achieves a lower relative error than the Hutch++ benchmark \cite{M21}.

In the following tests, we first set $N = 1, m = 100$ for each run of the stochastic Lanczos quadrature method to control the variance-reduction effect of the Monte Carlo simulation, and observe the variance of quadratic form estimations. Then we test the three estimators as the number of queries increases. We conduct 100 trials for each choice of $N\in \{30,40,\ldots,180\}$ and fixed $m = 100$, and plot the median, the $25^{th}$ and $75^{th}$ percentile relative errors.

\subsubsection{Synthetic matrix}
We first consider a synthetic Jordan-Wielandt matrix with $\textbf{B} = \textbf{U}\mathbf{\Sigma} \textbf{V}^T \in \mathbb{R}^{1000\times 1000}$. $\textbf{U}$ and $\textbf{V}$ are obtained by generating Gaussian distributed matrices and then orthogonalizing them, i.e., $\texttt{U = orth(randn(1000,1000));} \texttt{V = orth(randn(1000,1000))}$. The diagonal matrix $\mathbf{\Sigma}$ is also randomly generated by $\texttt{Sigma = diag(randn(1000,1))}$. Then, the tested synthetic matrix is created by 
$\texttt{A = [zeros(1000,1000) U*Sigma*V'; V*Sigma*U' zeros(1000,1000)]}$. 

We randomly generate one normalized Rademacher vector and two normalized partial Rademacher vectors (with a reproducible seed generator) as the initial vectors for 100 times. Then we compare the variances of quadratic form estimations in these estimators for $\mathrm{tr}(e^{\gamma \textbf{A}})$ with $\gamma = 1$, $N = 1$ and $m = 100$ Lanczos iterations. Table \ref{table:syn} and Figure \ref{fig:syn} illustrate that partial Rademacher initial vectors result in lower variances than the fully Rademacher distributed vectors within the same computational budget. 

\begin{figure}[htbp]
    \centering
    \includegraphics[width = 90mm]{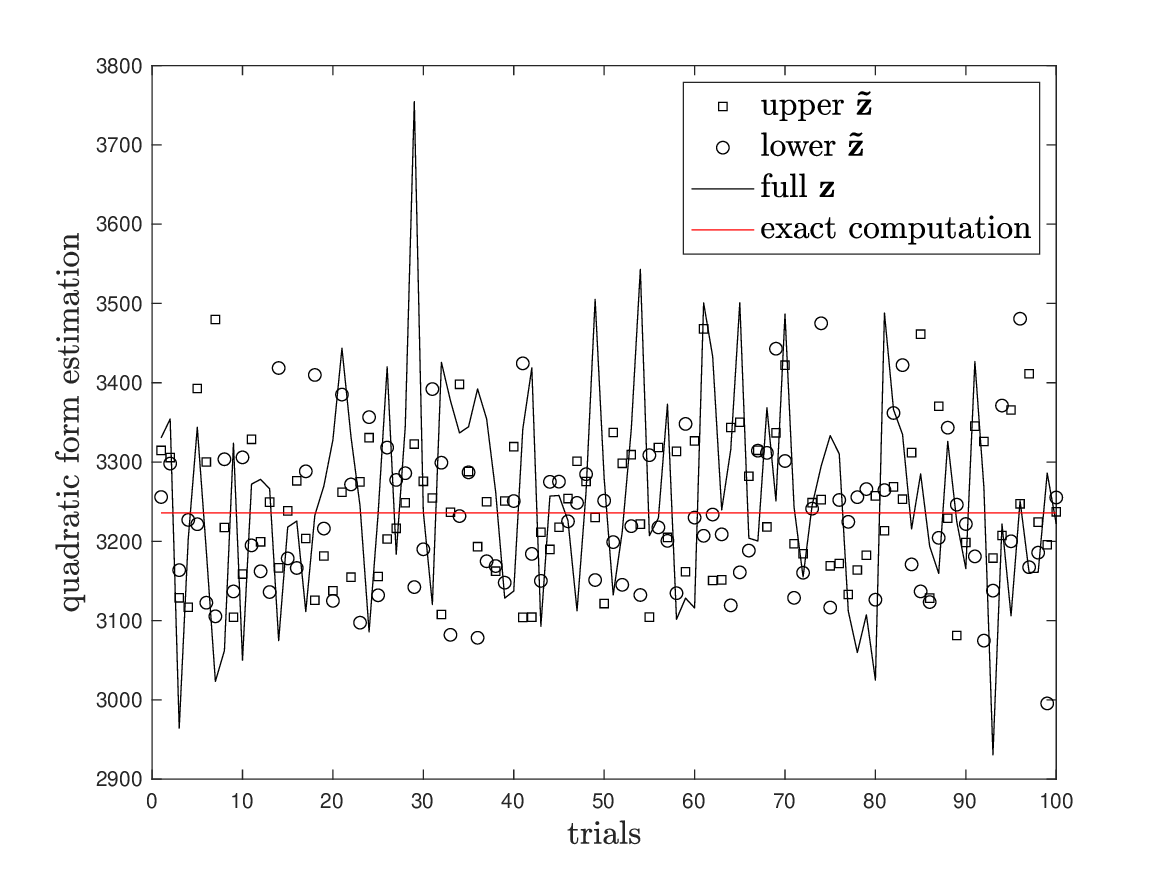}
    \caption{One hundred trials of quadratic form estimation in approximating $\mathrm{tr}(e^{\gamma \textbf{A}})$ with $\gamma = 1$, $N = 1$, $m=100$, synthetic Jordan-Wielandt matrix $\textbf{A}$ and different initial vectors. $\tilde{\textbf{z}}$ denote upper and lower partial Rademacher vectors, whereas all the elements of $\textbf{z}$ are Rademacher distributed}
    \label{fig:syn}
\end{figure}

\begin{table}[htbp]
    \caption{Variances of the quadratic form estimations in Figure \ref{fig:syn}}
    \label{table:syn}
    \begin{tabular}{@{}llll@{}}
        \toprule
            & lower partial Rademacher $\tilde{\textbf{z}}$ & upper partial Rademacher $\tilde{\textbf{z}}$ & fully Rademacher $\textbf{z}$ \\ \midrule
        Variance & 88.04 & 95.98 & 134.96 \\ \bottomrule
    \end{tabular}
\end{table}

Finally, we compare the accuracy of three different estimators: the Hutchinson trace estimator, the Hutchinson trace estimator with upper partial Rademacher vector, and the Hutch++ estimator. For each choice of $N\in\{30,40,\ldots,180\}$, the three estimators are run for 100 trials, with fixed $m=100$. Figure \ref{fig:syn_re} shows that the application of partial Rademacher starting vectors in the Hutchinson estimator results in higher accuracy than other estimators within the same number of matrix-vector multiplications.

\begin{figure}[htbp]
    \centering
    \includegraphics[width=120mm]{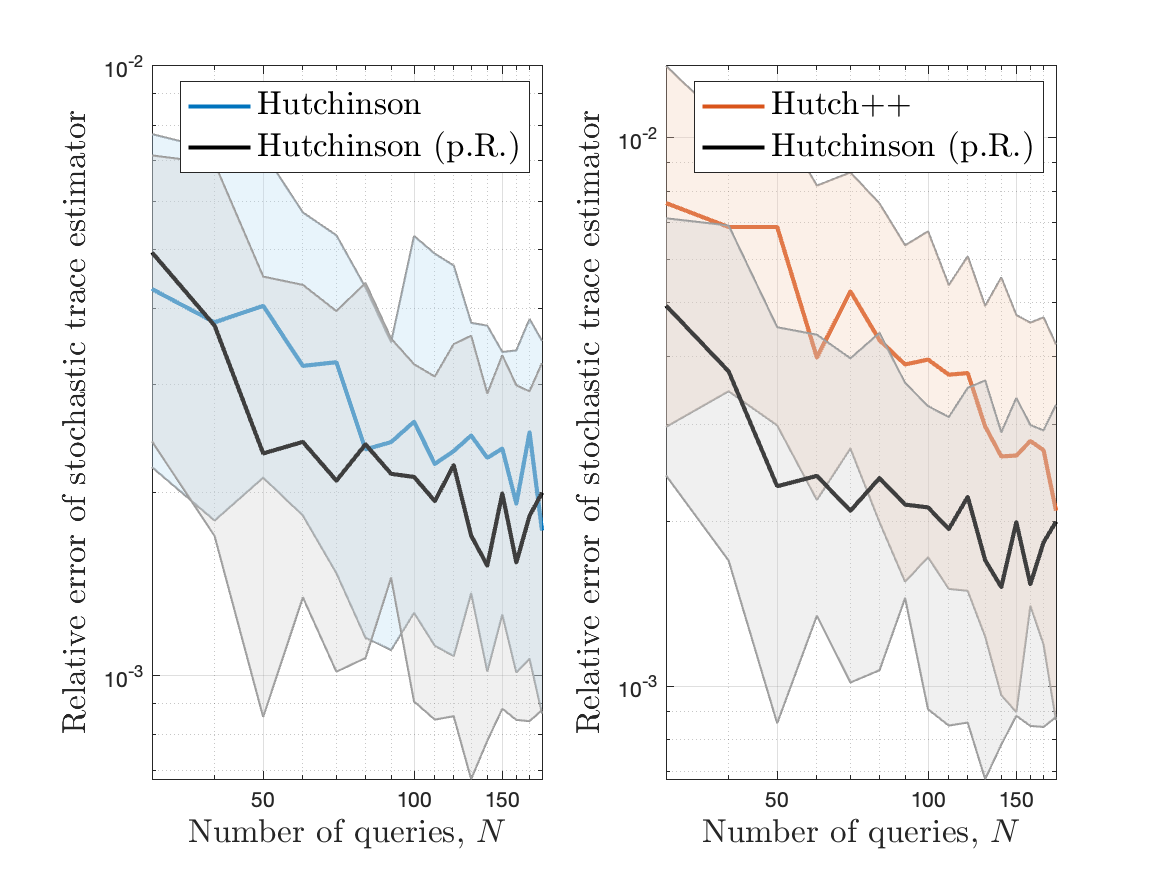}
    \caption{Relative error of stochastic Lanczos quadrature estimators against different numbers of queries $N\in\{30,40,\ldots,180\}$. In order to compare the accuracy of three estimators, i.e., the Hutchinson trace estimator, the Hutchinson trace estimator with upper partial Rademacher vector (denoted by p.R.), and the Hutch++ trace estimator, we record the median, the $25^{th}$ and $75^{th}$ percentile relative errors of approximating $\mathrm{tr}(e^{\gamma \textbf{A}})$ after 100 trials per parameter set. The shaded area around each curve is bounded by the $25^{th}$ and $75^{th}$ percentile curves of the corresponding estimator. Note that $\textbf{A}$ is a synthetic Jordan-Wielandt matrix, $\gamma = 1$ and $m=100$ are fixed during the test.}
    \label{fig:syn_re}
\end{figure}

\subsubsection{\texttt{email} dataset}
A further example is based on a directed network example of 1005 nodes and 24929 edges from the \texttt{email} dataset \cite{DH11,PBL17}. The adjacency matrix $\textbf{B}$ of this directed network is nonsymmetric, so we build a supra-adjacency matrix in the form of \eqref{Eq:A}. The estimation of the Estrada index $\mathrm{tr}(e^{\gamma \textbf{A}})$ with parameter $\gamma = 0.5/\lambda_{\max}$ is of interest. We again set $N = 1, m = 100$ for the stochastic Lanczos quadrature method and compare the estimations with two types of partial Rademacher vectors and fully distributed Rademacher vectors after 100 trials. Table \ref{table:email} and Figure \ref{fig:email} reflect the effect of variance reduction by using partial Rademacher vectors.

\begin{figure}[htbp]
    \centering
    \includegraphics[width = 90mm]{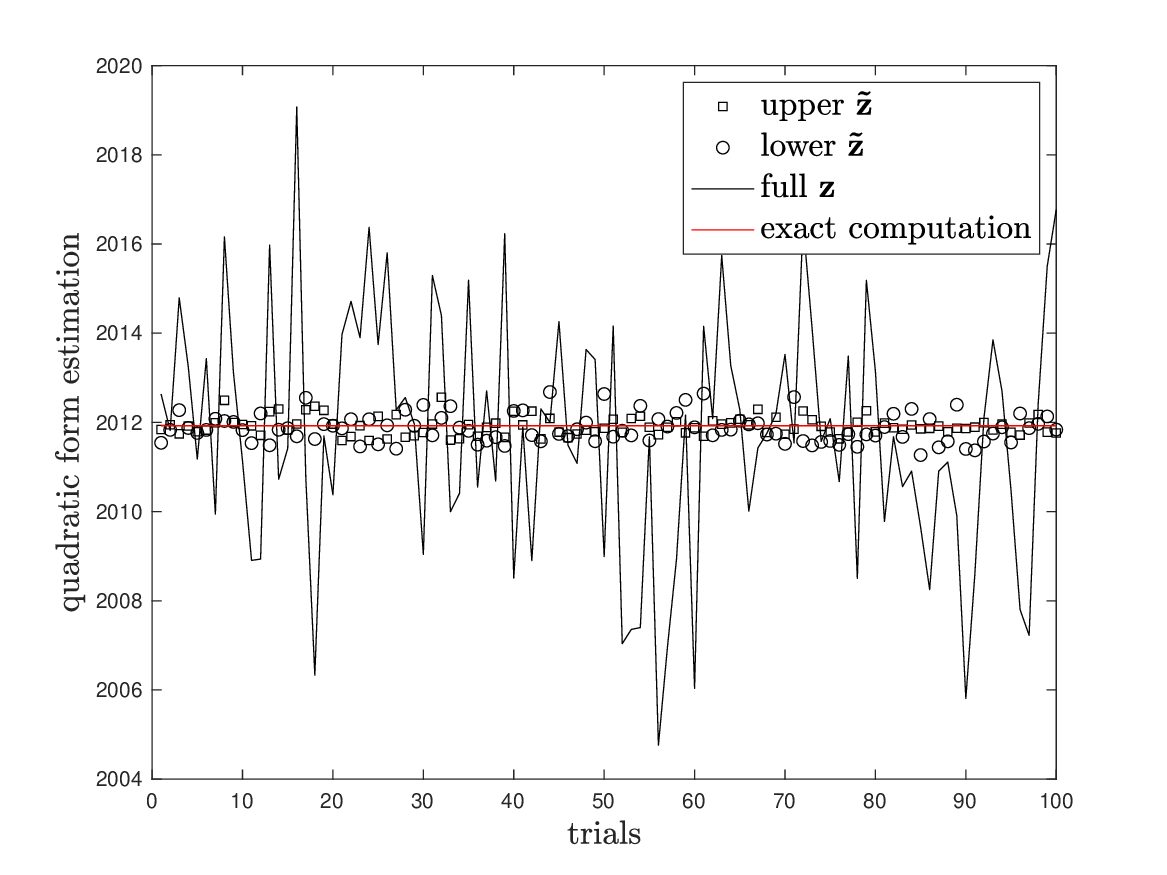}
    \caption{One hundred trials of quadratic form estimation in approximating $\mathrm{tr}(e^{\gamma \textbf{A}})$ with $\gamma = 0.5/\lambda_{\max}$, $N=1$, $m=100$, supra-adjacency matrix based on the \texttt{email} dataset \cite{DH11,PBL17} and different initial vectors. $\tilde{\textbf{z}}$ denote upper and lower partial Rademacher vectors, whereas all the elements of $\textbf{z}$ are Rademacher distributed}
    \label{fig:email}
\end{figure}

\begin{table}[htbp]
    \caption{Variances of the quadratic form estimations in Figure \ref{fig:email}}
    \label{table:email}

    \begin{tabular}{@{}llll@{}}
        \toprule
           & lower partial Rademacher $\tilde{\textbf{z}}$ & upper partial Rademacher $\tilde{\textbf{z}}$ & fully Rademacher $\textbf{z}$  \\ \midrule Variance & 0.20                             & 0.32  & 2.75 \\ \bottomrule
    \end{tabular}
\end{table}

Similar to the previous test on the synthetic matrix, we compare the performances of the Hutchinson estimator and the Hutch++ on approximating $\mathrm{tr}(e^{0.5\textbf{A}}/\lambda_{\max})$ of the supra-adjacency matrix $\textbf{A}$, with fixed $m=100$ and $N = 30,40,\ldots,180$. Figure \ref{fig:email_re} suggests the Hutchinson trace estimator with partial Rademacher vectors for this standardized matrix. Note that the Hutch++ estimator does not perform well in this case, since $e^{0.5\textbf{A}/\lambda_{\max}}$ has a relatively flat spectrum in $\left[e^{-0.5},e^{0.5}\right]$ after scaling.

\begin{figure}[htbp]
    \centering
    \includegraphics[width = 90mm]{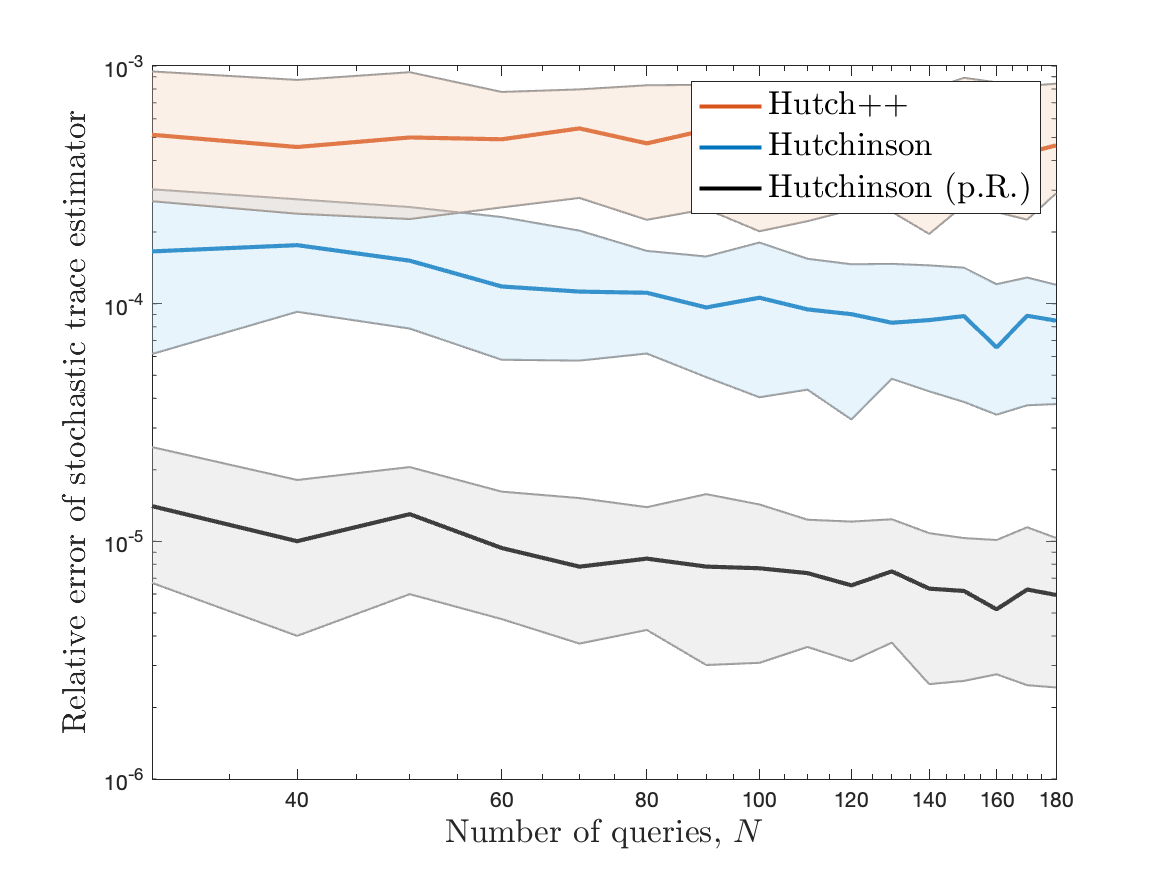}
    \caption{Relative error of stochastic Lanczos quadrature estimators against different numbers of queries $N\in\{30,40,\ldots,180\}$. In order to compare the accuracy of three estimators, i.e., the Hutchinson trace estimator, the Hutchinson trace estimator with upper partial Rademacher vector (denoted by p.R.), and the Hutch++ trace estimator, we record the median, the $25^{th}$ and $75^{th}$ percentile relative errors of approximating $\mathrm{tr}(e^{\gamma \textbf{A}})$ after 100 trials per parameter set. The shaded area around each curve is bounded by the $25^{th}$ and $75^{th}$ percentile curves of the corresponding estimator. Note that $\textbf{A}$ is the supra-adjacency matrix based on the \texttt{email} dataset \cite{DH11,PBL17}, $\gamma = 0.5/\lambda_{\max}$ and $m=100$ are fixed during the test.}
    \label{fig:email_re}
\end{figure}

\subsubsection{\texttt{Notre Dame networks} dataset}
We also study a bipartite graph example with 392400 players and 127823 movies from the Group Barabasi, \texttt{Notre Dame networks} dataset \cite{AJB99,DH11}. The full adjacency matrix of this network is also of the Jordan-Wielandt type, with the biadjacency matrix $\textbf{B} \in \mathbb{R}^{392400\times 127823}$. Similar to the previous two examples, with 100 iterations of the Lanczos method, tests are carried out on the two proposed trace estimators \eqref{eq:ste1}, \eqref{eq:ste2} and the Hutchinson trace estimator \eqref{eq:Hutchinson} for $\mathrm{tr}(e^{\gamma \textbf{A}})$, $\gamma = 1/\lambda_{\max}$. 

Owing to the limitations of memory and storage, the exact value of the Estrada index for the adjacency matrix of \texttt{Notre Dame networks} dataset is difficult to compute. Due to the unknown exact value, it is unavailable to record the relative error of approximations by different trace estimators. The results in Table \ref{table:movie} and Figure \ref{fig:movie} demonstrate that the use of a lower/upper partial Rademacher vector $\tilde{\textbf{z}}$ helps estimate the Estrada index with higher stability.

\begin{figure}[htbp]
    \centering
    \includegraphics[width = 90mm]{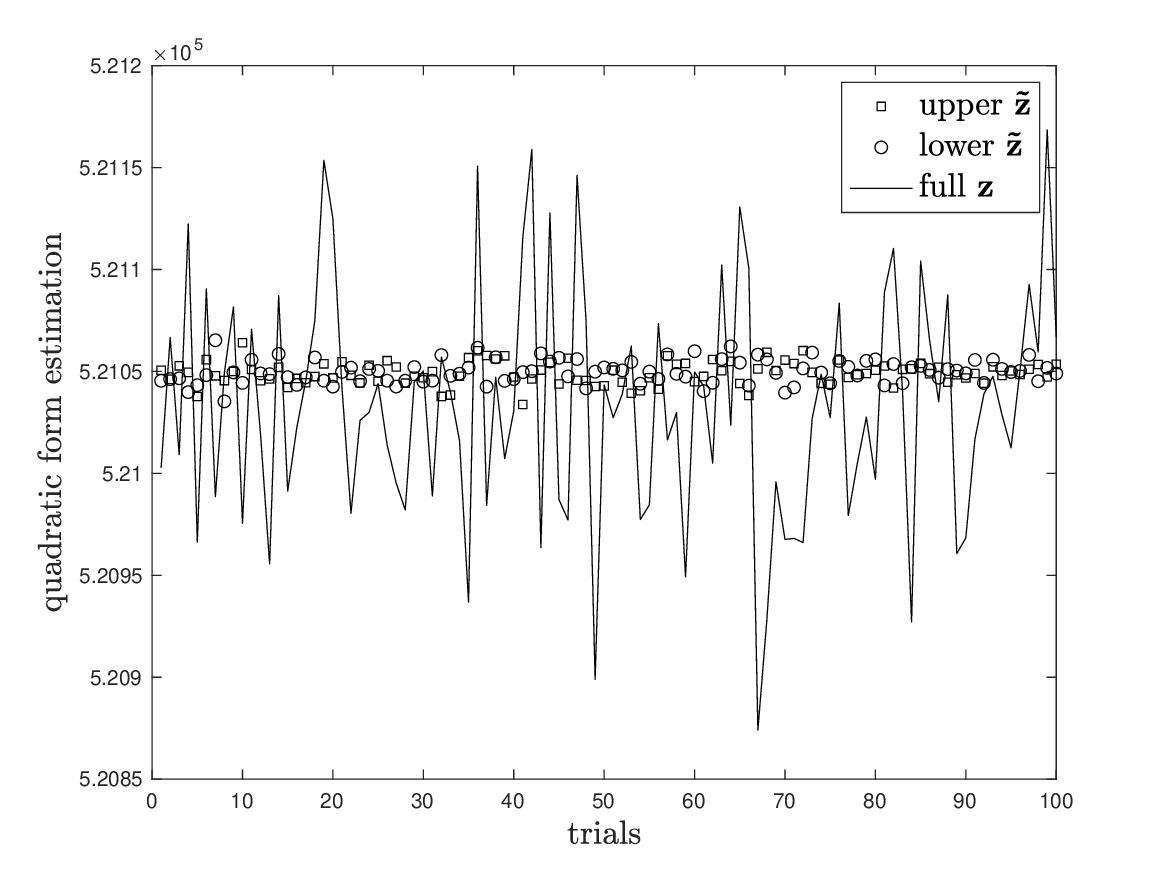}
    \caption{One hundred trials of quadratic form estimation in approximating $\mathrm{tr}(e^{\gamma \textbf{A}})$ with $\gamma = 1/\lambda_{\max}$, $N = 1$, $m=100$, full adjacency matrix of the \texttt{Notre Dame networks} dataset \cite{AJB99,DH11} and different initial vectors. $\tilde{\textbf{z}}$ denote upper and lower partial Rademacher vectors, whereas all the elements of $\textbf{z}$ are Rademacher distributed}
    \label{fig:movie}
\end{figure}

\begin{table}[htbp]
    \caption{Variances of the quadratic form estimations in Figure \ref{fig:movie}}
    \label{table:movie}

    \begin{tabular}{@{}llll@{}}
        \toprule
           & lower partial Rademacher $\tilde{\textbf{z}}$ & upper partial Rademacher $\tilde{\textbf{z}}$ & fully Rademacher $\textbf{z}$  \\ \midrule Variance & 5.56                            & 5.67 & 57.96 \\ \bottomrule
    \end{tabular}

\end{table}

\section{Concluding discussion}

\label{sec:cr}

Symmetric Lanczos quadrature rules enjoy a higher convergence rate than asymmetric rules in estimating Riemann-Stieltjes integrals. There was no sufficient and necessary condition to guarantee a symmetric Lanczos quadrature rule until this paper. This paper proves that for the class of Jordan-Wielandt matrices, with a careful choice of the starting vector for the Lanczos process, one can realize a symmetric quadrature rule. Such a favorable Lanczos quadrature can be used in estimating the Estrada index of bipartite or directed graphs. We only move forward a bit on finding a sufficient and necessary condition of a symmetric Lanczos quadrature for a special class of matrices. For general matrices, this problem remains open and more challenging.
\vspace{0.5cm}

\noindent {\bf Acknowledgments.} The research is supported by Natural Science Foundation of China (12271047); National Key Technologies Research and Development Program(2025YFG0202100); Guangdong Provincial Key Laboratory of Interdisciplinary Research and Application for Data Science, Beijing Normal-Hong Kong Baptist University (2022B1212010006); BNBU research grant (UICR0400036-21C, UICR0400008-21; R04202405-21); Guangdong College Enhancement and Innovation Program (2021ZDZX1046). We are grateful to Professor Zhongxiao Jia of Tsinghua University and Professor James Lambers of University of Southern Mississippi for critical reading of the manuscript and providing useful feedback. We also appreciate the feedback provided by the reviewers.

\section*{Appendix}
\begin{center}
		\begin{longtable}{lll}
			\caption{Table of Notation} \label{tab:notation} \\
			
			\hline \multicolumn{1}{l}{\textbf{Symbols}} & \multicolumn{1}{l}{\textbf{Descriptions}} \\ \hline 
			\endfirsthead
			
			\multicolumn{2}{l}%
			{{\bfseries \tablename\ \thetable{} -- continued from previous page}} \\
			\hline \multicolumn{1}{l}{\textbf{Symbols}} & \multicolumn{1}{l}{\textbf{Descriptions}} \\
            \hline 
			\endhead
			
			\hline \multicolumn{2}{l}{{Continued on next page}} \\ 
			\endfoot
			
			\hline
			\endlastfoot
			\multicolumn{2}{l}{{Functionals}}   \\
          \hline
          $f(\cdot), g(\cdot)$ & function or matrix function \\
          $\omega(\cdot)$ & weight function \\
          $\mu(\cdot)$ & measure function \\
          $l_k(\cdot)$ & orthogonal polynomial of degree $k$ \\
          \hline
          \multicolumn{2}{l}{{Matrices}}   \\
          \hline
          $\textbf{A}$ & symmetric matrix \\
          $\textbf{Q}$ & orthogonal matrix with eigenvectors of $\textbf{A}$ \\
          $\mathbf{\Lambda}$ & diagonal matrix with eigenvalues of $\textbf{A}$ \\
          $\textbf{B}$ & matrix \\
          $\textbf{U}$ & orthogonal matrices with left singular vectors of $\textbf{B}$ \\
          $\textbf{V}$ & orthogonal matrices of right singular vectors of $\textbf{B}$ \\
          $\mathbf{\Sigma}$ & diagonal matrix with singular values of $\textbf{B}$\\ 
          $\textbf{T}$ & Jacobi matrix (symmetric tridiagonal matrix) \\
          $\mathbf{\Gamma}$ & orthogonal matrix with eigenvectors of $\textbf{T}$ \\
          $\mathbf{\Theta}$ & diagonal matrix with eigenvalues of $\textbf{T}$ \\
          $\textbf{J}$ & bidiagonal matrix \\
          $\textbf{X}$ & orthogonal matrix with left singular vectors of $\textbf{J}$ \\
          $\textbf{Y}$ & orthogonal matrix with right singular vectors of $\textbf{J}$ \\
          $\textbf{I}$ & identity matrix \\
          $\textbf{O}$ & zero matrix \\
          $\textbf{P}$ & permutation matrix (anti-diagonal identity matrix in some cases) \\
          $\textbf{S}$ & signature matrix with diagonal entries $\pm 1$ \\
          $\textbf{H}$ & Householder matrix \\
          $\textbf{[\quad]}_i, \textbf{[\quad]}_{ij}$ & $(i)$-block or $(i,j)$-block of the corresponding matrix \\
          $\textbf{[\quad]}_{(n)}$ & matrix of size $n$ \\
          \hline
          \multicolumn{2}{l}{{Vectors}}   \\
          \hline
          $\textbf{u}, \textbf{v}$ & vectors in the Lanczos process \\
          $\textbf{w}$ & (partial) absolute palindrome \\
          $\boldsymbol{\xi}$ & coordinate vector \\
          $\textbf{e}_i$ & $i^{th}$ column of the identity matrix \\ 
          $\textbf{0}, \textbf{1}$ & vector of all zeros or ones \\
          $\boldsymbol{\mu}$ & measure vector \\
          $\boldsymbol{\psi}$ & eigenvectors of $\textbf{T}$ \\
          $\textbf{x}, \textbf{y}$ & left and right singular vectors of $\textbf{J}$ \\
          $\textbf{z}, \tilde{\textbf{z}}$ & Rademacher vector, partial Rademacher vector \\
          $\tilde{\textbf{z}}$ & partial Rademacher vector \\
          $|\cdot|$ & vector with absolute elements \\
          $\textbf{[\;]}_{i}$ & $i^{th}$ vector in Monte Carlo simulation \\
          $\textbf{[\;]}^{(k)}$ & $k^{th}$ vector in the Lanczos iteration \\
          $\textbf{[\;]}_{(n)}$ & vector of size $n$ \\
          \hline
          \multicolumn{2}{l}{{Scalars}}   \\
          \hline
          $i, j, k, p, q$ & indices or subscripts \\
          $m$ & step of the Lanczos algorithm \\
          $m^*$ & maximum iteration \\
          $n$, $n_1$, $n_2$ & dimension of matrix \\
          $N$ & number of Monte Carlo simulations \\
          $a, b$ & real numbers, left and right endpoints of interval $[a,b]$ \\
          $r, r_{\textbf{B}}$ & rank of matrix\\
          $s$ & shift \\
          $t$ & independent variable with respect to functionals \\
          $\alpha, \beta$ & entries in the Jacobi/bidiagonal matrix \\
          $\gamma$ & constant \\
          $\kappa$ & condition number of matrix \\
          $\theta_k$ & quadrature nodes/eigenvalues of $\textbf{T}$ \\
          $\tau_k$ & quadrature weights \\
          $\lambda_j$ & eigenvalues of $\textbf{A}$ \\
          $\delta$ & singular values of $\textbf{J}$ \\
          $[\;]_j$ & $j^{th}$ element of certain vector \\
          $\epsilon$ & error bound \\
          $\rho$ & elliptical radius \\
          $E_{\rho}$ & open Bernstein ellipse \\
          $M_{\rho}$ & maximum of $|g(\cdot)|$ on $E_\rho$ \\
          $\mathcal{I}$ & Riemann-Stieltjes integral \\
          $\mathcal{I}_m$ & $m$-node Lanczos quadrature \\
          $Q(\textbf{u},f,\textbf{A})$ & quadratic form  \\
          $Q_m(\textbf{u},f,\textbf{A})$ & approximation of quadratic form by $m$-node Lanczos quadrature \\
          $\mathrm{tr}(f(\textbf{A}))$ & trace of matrix function \\
          $\mathrm{tr}(f(\textbf{A}))^{\dagger}$ & stochastic trace estimators with partial Rademacher vectors \\
        			
		\end{longtable}
	\end{center}




\end{document}